\documentclass[12pt]{amsart}
\usepackage{amsfonts,latexsym,rawfonts,amsmath,amssymb,amsthm,braket,cite}

\newtheorem{theorem}{Theorem}[section]
\newtheorem{lemma}[theorem]{Lemma}
\newtheorem{proposition}[theorem]{Proposition}

 \theoremstyle{definition}
\newtheorem{definition}[theorem]{Definition}

\theoremstyle{remark}
\newtheorem{remark}[theorem]{Remark}

\numberwithin{equation}{section}

\makeatletter
\@namedef{subjclassname@2020}{\textup{2020} Mathematics Subject Classification}
\makeatother

\begin{document}

\title[$k$-convex hypersurface for prescribed curvature]{The existence of $k$-convex hypersurface for a class of Hessian  curvature equations}

\author{Kang Xiao}
\address{Faculty of Mathematics and Statistics, Hubei Key Laboratory of Applied Mathematics, Hubei University,  Wuhan 430062, P.R. China}
\email{dalangbeikang@163.com}

\author{Jiabao Gong$^{\ast}$}
\address{Faculty of Mathematics and Statistics, Hubei Key Laboratory of Applied Mathematics, Hubei University,  Wuhan 430062, P.R. China}
\email{202321104011284@stu.hubu.edu.cn}

\thanks{$\ast$ Corresponding author}

\keywords{$k$-convex hypersurface; curvature equations; constant rank theorem.}

\subjclass[2020]{Primary 35J60, 35B45; Secondary  53C21.}

\begin{abstract}
This article investigates the existence of closed, star-shaped hypersurfaces for a class of Hessian  curvature equations. By combining a priori estimates with the continuity method, we establish the existence and uniqueness of $k$-convex hypersurfaces for both nonhomogeneous and homogeneous Hessian curvature equations, and by establishing a constant rank theorem, we prove that the resulting $k$-convex hypersurfaces are strictly convex.
\end{abstract}

\maketitle

\baselineskip18pt
\section{Introduction}

The classical prescribed Weingarten curvature problem refers to finding a hypersurface $M\subset\mathbb{R}^{n+1}$
whose principal curvatures $\kappa$ satisfy
\begin{equation*}
\sigma_k(\kappa(X))=f(X),\quad X \in M,
\end{equation*}
where $f$ is a given function and $\sigma_k$ denotes the $k$-th elementary symmetric function. This problem has attracted considerable attention.
In the mean curvature case \(k=1\), related results were established by Bakelman--Kantor \cite{Ba1,Ba2} and Treibergs--Wei \cite{Tr}, while the Gauss curvature case \(k=n\) was studied by Oliker \cite{Ol}. More general prescribed Weingarten curvature equations were considered by Aleksandrov \cite{Al1}, Firey \cite{Fi}, and Caffarelli--Nirenberg--Spruck \cite{Ca}. These results have also been extended to various Riemannian settings, including the unit sphere by Li--Oliker \cite{Li-Ol}, space forms by Barbosa--Lira--Oliker \cite{Ba-Li}, hyperbolic spaces by Jin--Li \cite{Jin}, warped product manifolds by Andrade--Barbosa--Lira \cite{An}, and Riemannian manifolds admitting a global normal Gaussian coordinate system by Li--Sheng \cite{Li-Sh}.

More generally, the prescribed Weingarten curvature equation can be written as
\begin{equation*}
  \sigma_k(\kappa)=f(X,\nu(X)),\quad X \in M,
\end{equation*}
where $f$ depends on the position vector $X$ and the outward unit normal $\nu$. Curvature estimates are a central issue in the study of such equations. For $k=1$, these estimates follow from the theory of quasilinear elliptic equations. For $k=2$, global $C^2$ estimates were established by Guan--Ren--Wang \cite{Guan-Ren15}. Spruck--Xiao \cite{SX} extended the $2$-convex case to space forms and also provided a simple proof for hypersurfaces in Euclidean space. For $2 < k < n$, $C^2$ estimates were obtained for prescribed curvature measure equations in \cite{Guan12,Guan09}, where
\[
f = \widetilde{f}\left(\frac{X}{|X|}\right)|X|^{-(n+1)}\langle X,\nu\rangle .
\]
Ren--Wang \cite{Ren,Ren1} proved $C^2$ estimates for the cases $k=n-1$ and $k=n-2$. For $k=n$, curvature estimates for general functions $f(X,\nu)$ were obtained by Caffarelli--Nirenberg--Spruck \cite{Caffarelli-Nirenberg-Spruck-1984}. Ivochkina \cite{Iv1,Iv2} studied the Dirichlet problem for the above equation on domains in $\mathbb{R}^n$, obtaining $C^2$ estimates under additional assumptions on the dependence of $f$ on $\nu$. Guan--Ren--Wang \cite{Guan-Ren15} established global $C^2$ estimates for closed convex hypersurfaces in Euclidean space, and Chen--Li--Wang \cite{Chen-Li-Wang-2018} obtained curvature estimates for convex hypersurfaces in general warped product manifolds.

In this paper, we consider the following prescribed Weingarten curvature equation
\begin{equation}\label{G-eqk}
\sigma_k(\kappa)=f\!\left(\frac{X}{|X|}\right) |X|^b \langle X, \nu\rangle^q,
\quad X \in M,
\end{equation}
where $1\leq k<n$, $b, q$ are real numbers, and $f \in C^2(\mathbb{S}^n)$ is a positive function. The main purpose is to study the existence, uniqueness and strict convexity of $k$-convex star-shaped hypersurfaces satisfying \eqref{G-eqk}. Recall that $k$-convexity is defined as follows \cite{Guan-Lin-Ma-2006,HX-2013,Guan09}.

\begin{definition}
A regular $C^2$-hypersurface $M$ is called $k$-convex if, at each $X\in M$, $\kappa(X)$ satisfies $\kappa \in \Gamma_{k}$, where
$\Gamma_{k}=\{\xi \in \mathbb{R}^n: \sigma_{j}(\xi)>0,~ \forall ~ 1\leq j \leq k\}$.
\end{definition}

An $n$-convex hypersurface is strictly locally convex.  When $k=n$, looking for a strictly convex hypersurface $M=\{X\in\mathbb{R}^{n+1}:X=\rho(x)x, ~ x \in \mathbb{S}^n\}$ satisfying equation \eqref{G-eqk} is equivalent to solving the  Monge--Amp\`ere   type equation \begin{equation}\label{Lp-eq}
 \det (D^2 h + h I) = h^{-q} (h^2 + |D h|^2)^{-\frac{b}{2}} f^{-1}, \quad \text{on} \quad \mathbb{S}^n,
 \end{equation}
  where $h$ is the support function of the convex body enclosed by $M$, $I$ is the identity matrix, $Dh$ and $D^2h$ are the gradient and Hessian tensor of $h$ with respect to the standard metric on $\mathbb{S}^n$. Equation \eqref{Lp-eq} is exactly the equation considered in the $L_p$ dual Minkowski problem. For references related to this problem, see \cite{BF-2019, CHZ-2019, Chen-Li-2021, HZ-2018, LYZ-2018-1}.

Equation \eqref{G-eqk} is also related to  curvature measure problem. Let
\[
M = \{ X \in \mathbb{R}^{n+1} : X = \rho(x)x,\ x \in \mathbb{S}^n \}
\]
be a star-shaped hypersurface satisfying \eqref{G-eqk}. When $b = -n-1$, $q = 1$, and $1 \leq k \leq n$, equation \eqref{G-eqk} reduces to
\begin{equation}\label{curvature-measure-eq}
\sigma_k(\kappa) = \rho^{1-n}(\rho^2 + |D\rho|^2)^{-\frac{1}{2}} f,  \quad \text{on } \quad \mathbb{S}^n,
\end{equation}
which is precisely the equation associated with the prescribed $(n-k)$-th curvature measure problem. In the case $k=n$, this becomes the prescribed $0$-th curvature measure problem, namely the Aleksandrov problem. The existence and uniqueness of solutions were obtained by Aleksandrov \cite{Aleksandrov-1942}. The regularity theory in the elliptic case was developed by Pogorelov \cite{Pogorelov-1973} for $n=2$ and by Oliker \cite{Oliker-1983} in higher dimensions. The degenerate case was studied by Guan--Li \cite{Guan-Li-1997}. For general $1 \leq k \leq n$, finding $k$-convex solutions of \eqref{curvature-measure-eq} remains an interesting problem; see \cite{Guan12,HX-2013,Guan09}.
In the case $1 \leq k < n$, $q \leq 1$, and $-b - q - k > 0,$
the present paper establishes the existence and uniqueness of a $k$-convex solution to \eqref{G-eqk}. This generalizes the corresponding results in \cite{Guan12,HX-2013,Guan09}. The first main result is stated as follows.

\begin{theorem}\label{Th-One}
  Let $1\leq k <n $, $q\leq 1$, and $f \in C^2(\mathbb{S}^n)$ be a positive function. If $-b-q-k>0$, then there exists a unique  $k$-convex star-shaped hypersurface $M \in C^{3,\alpha}$ with $\alpha \in (0,1)$ such that it satisfies equation \eqref{G-eqk}. Furthermore, if $b\leq -k$ and
\begin{equation}\label{908998}
f^{-\frac{1}{k}}\left(\frac{X}{|X|}\right) |X|^{-\frac{b}{k}}~\mbox{is locally convex in}~\mathbb{R}^{n+1}  \setminus \{0\},
\end{equation}
then $M$ is strictly convex.
\end{theorem}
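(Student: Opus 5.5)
We prove Theorem \ref{Th-One} in three steps: a priori estimates, the continuity method with uniqueness, and a constant rank argument for strict convexity.

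\textbf{Setting.} Write $M=\{\rho(x)x : x\in\mathbb{S}^n\}$ and set $u=\log\rho$. Then $|X|=\rho$ and $\langle X,\nu\rangle=\rho^2/\sqrt{\rho^2+|D\rho|^2}=\rho/\sqrt{1+|Du|^2}$, so \eqref{G-eqk} becomes a fully nonlinear equation on $\mathbb{S}^n$:
\[
F(D^2u,Du,u):=\sigma_k(\kappa[u])=f(x)\,e^{(b+q)u}(1+|Du|^2)^{-q/2}.
\]
Here $e^{u}\kappa$ depends only on $(Du,D^2u)$, so the left side scales like $e^{-ku}$. After multiplying by $e^{ku}$, the $u$-dependence appears only through $e^{(b+q+k)u}$. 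Since $b+q+k<0$, the right side is strictly decreasing in $u$. This monotonicity drives both the estimates and uniqueness.

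\textbf{A priori estimates.} For the $C^0$ bound, at a maximum point of $u$ we have $Du=0$ and $D^2u\le0$, hence $\kappa_i\ge e^{-u}$ and $\sigma_k\ge\binom nk e^{-ku}$. This gives $\binom nk\le f\,e^{(b+q+k)u_{\max}}$, which bounds $u_{\max}$ from below by a constant depending on $\max f$. Symmetrically, the minimum point bounds $u_{\min}$ from above. Combining the two only gives $\min u\le C\le\max u$, so we need a genuine two-sided bound. The correct reading is that at the max, $\sigma_k\le\binom nk e^{-ku}$ along the comparison, and we use it as an upper barrier; the sign of $b+q+k$ then yields $C^{-1}\le\rho\le C$. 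For the gradient bound, we apply the maximum principle to $|Du|^2$, or equivalently to $\langle X,\nu\rangle^{-1}$. At its maximum, the terms coming from differentiating $e^{(b+q)u}(1+|Du|^2)^{-q/2}$ have a favorable sign precisely when $q\le1$ and $b+q+k<0$, and this gives $\langle X,\nu\rangle\ge c>0$. For the curvature bound, we use the test function $\log\kappa_{\max}-N\log\langle X,\nu\rangle+\beta|X|^2/2$, following the approach for curvature-measure type right-hand sides in \cite{Guan12,Guan09}. The structure $f(X)|X|^b\langle X,\nu\rangle^q$ depends on $\nu$ only through the power of the support function, so the bad third-order terms $\sum h_{11i}^2$ are absorbed by the concavity of $\sigma_k^{1/k}$ together with $-N\log\langle X,\nu\rangle$. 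This $C^2$ step is the main obstacle. General $\nu$-dependence fails for $2<k<n$, so we must exploit the special $\langle X,\nu\rangle^q$ form. Once $C^2$ is bounded, the equation is uniformly elliptic and concave on $\Gamma_k$, and Evans--Krylov and Schauder theory give $C^{3,\alpha}$ bounds.

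\textbf{Existence and uniqueness.} Consider the family $f_t=(1-t)c_0+tf$, where at $t=0$ a sphere of suitable radius solves the equation, since $b+q+k\ne0$. The linearized operator is $L v=F^{ij}v_{ij}+b^iv_i-(b+q+k)\tilde f v$ with zero-order coefficient of favorable sign. It is therefore invertible, which gives openness, while the estimates give closedness. Uniqueness follows from the comparison principle in the same spirit. Given two solutions $\rho_1,\rho_2$, let $s\rho_2$ be the smallest dilate lying above $\rho_1$ and touching it. The scaling behaviour $s^{-k}$ versus $s^{b+q}$, together with $-b-q-k>0$, forces $s=1$ at the touching point.

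\textbf{Strict convexity.} When $b\le -k$ and \eqref{908998} holds, we use a deformation argument. Along the continuity path $f_t$ (with $c_0$ constant, the convexity hypothesis is preserved), the sphere is strictly convex at $t=0$. We then prove a constant rank theorem: if the second fundamental form is nonnegative definite, it has constant rank. Following \cite{Guan-Lin-Ma-2006}, we set $\phi=\sigma_{l+1}(h)+\sigma_{l+2}(h)/\sigma_{l+1}(h)$ at a point of minimal rank $l$ and show $F^{ij}\phi_{ij}\le C(\phi+|D\phi|)$. The inhomogeneous terms are controlled exactly by the convexity of $f^{-1/k}|X|^{-b/k}$, rewritten as $\sigma_k^{1/k}\cdot\langle X,\nu\rangle^{-q/k}$-type terms with $q\le1$. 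The strong maximum principle then gives constant rank. A closed hypersurface has a point of positive definite second fundamental form, so the rank is $n$ whenever the form is nonnegative. By continuity in $t$, strict convexity cannot be lost along the path, so the solution at $t=1$, which is $M$ by uniqueness, is strictly convex.
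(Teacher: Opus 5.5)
Your outline (a priori bounds, continuity method with an invertible linearization, dilation-comparison uniqueness, and a constant rank theorem along the path) follows the paper. However, the strict convexity step fails as written, because your path $f_t=(1-t)c_0+tf$ does not preserve hypothesis \eqref{908998}. Along that path
\[
f_t^{-\frac1k}\Bigl(\frac{X}{|X|}\Bigr)|X|^{-\frac bk}=\Bigl((1-t)\psi_0^{-k}+t\psi^{-k}\Bigr)^{-\frac1k},\qquad \psi_0=c_0^{-\frac1k}|X|^{-\frac bk},\quad \psi=f^{-\frac1k}\Bigl(\frac{X}{|X|}\Bigr)|X|^{-\frac bk}.
\]
This is a $(-k)$-power mean of two convex functions, and it is not convex in general. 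For instance, take $k=1$, $b=-1$, which is admissible when $q<0$. Then this function is the gauge of the star body with radial function $(1-t)c_0+tf$, i.e. a radial combination of a ball and the convex body whose radial function is $f$. If the latter is a long thin ellipsoid, the combination has spikes and is not convex. So at intermediate $t$ your constant rank theorem has no hypothesis to run on, and the closedness of $\{t: M_t \text{ strictly convex}\}$ is unproved.

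The paper avoids this by choosing $f_t=(1-t+tf^{-1/k})^{-k}$. Then $f_t^{-1/k}(X/|X|)|X|^{-b/k}=(1-t)|X|^{-b/k}+t\psi$ is a convex combination of convex functions. The convexity of $|X|^{-b/k}$ is exactly where $b\le -k$ is needed, and your proposal never uses that hypothesis. Since solutions are unique, you can run both existence and the convexity deformation along this path.

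There are also smaller errors. Your $C^0$ paragraph contains a sign slip that leads you astray. From $C_n^k\le f e^{(b+q+k)u_{\max}}$ with $b+q+k<0$ you get an upper bound for $u_{\max}$, since dividing by $b+q+k$ reverses the inequality. At the minimum, $D^2u\ge 0$ gives $\kappa_i\le e^{-u}$, hence $\sigma_k\le C_n^k e^{-ku}$ by monotonicity on $\Gamma_k$, which yields a lower bound for $u_{\min}$. These two already form the two-sided bound. Your ``correct reading'' ($\sigma_k\le C_n^k e^{-ku}$ at the maximum) is false.

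The hypothesis $q\le1$ plays no role in the gradient estimate: at a maximum of $|Du|^2$, the derivative of the $(1+|Du|^2)$-factor vanishes. Nor does it matter in the constant rank computation, where the factor $\langle X,\nu\rangle^{-q/k}$ contributes $\sim 0$ in bad directions for every $q$. It enters only through the curvature estimate.

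Finally, the dilation comparison yields $s\le 1$, not $s=1$. You must also apply it with $\rho_1$ and $\rho_2$ interchanged to conclude $\rho_1\equiv\rho_2$.
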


\begin{remark}
For equation \eqref{G-eqk} with $b = -n-1$ and $q = 1$, Guan--Lin--Ma \cite{Guan09} established the existence of a strictly convex hypersurface, while Guan--Li--Li \cite{Guan12} obtained the existence of a $k$-convex hypersurface. Huang--Xu \cite{HX-2013} proved the existence of a $k$-convex star-shaped hypersurface for \eqref{G-eqk} with $b = -n-1$, $q \leq 1$, and $q \neq 0$. Therefore, Theorem \ref{Th-One} contains the results of \cite{Guan12,Guan09,HX-2013} as special cases, since it allows a wider range of  $b$ and $q$.
\end{remark}

When $-b-q-k=0$, the following theorem gives the second main result of this paper.

\begin{theorem}\label{Th-Two}
  Let $1\leq k <n $, $q\leq 1$, and $f \in C^2(\mathbb{S}^n)$ be a positive function. If $-b-q-k=0$ and $q \neq 0$, then there exist a positive constant $\gamma$ and a $k$-convex star-shaped hypersurface $M \in C^{3,\alpha}$ with $\alpha \in (0,1)$ satisfying the homogeneous curvature equation
  \begin{equation}\label{eq:1.4}
    \sigma_k(\kappa)=\gamma f\left(\frac{X}{|X|}\right) |X|^b \langle X, \nu\rangle^q, \quad X \in M.
    \end{equation}
    In particular, $\gamma$ is unique, and the $k$-convex star-shaped hypersurface $M$ is unique up to homothetic dilations. Furthermore, if $b\leq -k$ and $f$ satisfies \eqref{908998}, then $M$ is strictly convex.
\end{theorem}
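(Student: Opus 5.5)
The plan is to obtain Theorem \ref{Th-Two} as a limit of Theorem \ref{Th-One}, in the spirit of the approximation used for the homogeneous problem in \cite{HX-2013}. We write $M$ radially as $\rho(x)x$, $x\in\mathbb{S}^n$. For $\varepsilon>0$ small, set $b_\varepsilon=b-\varepsilon$, so that $-b_\varepsilon-q-k=\varepsilon>0$. Theorem \ref{Th-One} then gives a unique $k$-convex star-shaped solution $M_\varepsilon=\{\rho_\varepsilon(x)x\}$ of $\sigma_k(\kappa)=f|X|^{b_\varepsilon}\langle X,\nu\rangle^q$.

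\textbf{Rescaling.} Both sides of this equation scale differently under $X\mapsto tX$. The left side is homogeneous of degree $-k$. The right side is homogeneous of degree $b_\varepsilon+q=-k-\varepsilon$. We normalize by $\tilde\rho_\varepsilon=\rho_\varepsilon/\max_{\mathbb{S}^n}\rho_\varepsilon$, so that $\max\tilde\rho_\varepsilon=1$. Then $\tilde M_\varepsilon$ solves
\begin{equation*}
\sigma_k(\kappa)=\gamma_\varepsilon f|X|^{b_\varepsilon}\langle X,\nu\rangle^q,\qquad \gamma_\varepsilon=(\max\rho_\varepsilon)^{-\varepsilon}.
\end{equation*}

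\textbf{Bounds on $\gamma_\varepsilon$.} We first bound $\gamma_\varepsilon$ from above and below uniformly in $\varepsilon$. We evaluate the equation at the maximum and minimum points of $\tilde\rho_\varepsilon$. At these points $\nu=X/|X|$ and the second fundamental form is compared with that of a sphere, giving $\sigma_k(\kappa)\ge C_n^k\rho^{-k}$ at the max and $\le C_n^k\rho^{-k}$ at the min. At the max, where $\rho=1$, this yields $\gamma_\varepsilon\max f\ge C_n^k$. At the min, it gives $\gamma_\varepsilon \min f\,(\min\tilde\rho_\varepsilon)^{-k-\varepsilon}\le C_n^k(\min\tilde\rho_\varepsilon)^{-k}$. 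Hence
\begin{equation*}
\gamma_\varepsilon\le C_n^k(\min f)^{-1}(\min\tilde\rho_\varepsilon)^{\varepsilon}\le C_n^k(\min f)^{-1}.
\end{equation*}

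\textbf{Uniform $C^0$ bound.} This is the main obstacle: a lower bound for $\min\tilde\rho_\varepsilon$ independent of $\varepsilon$, since the maximum principle alone only controls ratios through the factor $\varepsilon$. The first approach to try is a gradient estimate for $\log\tilde\rho_\varepsilon$. Using $\langle X,\nu\rangle=\rho^2/\sqrt{\rho^2+|D\rho|^2}$, the equation is rewritten for $u=\log\rho$. We then run the standard maximum principle argument on $|Du|^2$ used for curvature measure type equations, as in \cite{Guan09,HX-2013}. The assumption $q\le 1$ together with $q\neq0$ should make the $\langle X,\nu\rangle^q$ term give a favorable sign, producing $|D\log\tilde\rho_\varepsilon|\le C$ independent of $\varepsilon$. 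Since $\max\tilde\rho_\varepsilon=1$, this Harnack-type bound gives $C^{-1}\le\tilde\rho_\varepsilon\le1$. Once these hold, the $C^2$ estimates of Theorem \ref{Th-One}, together with Evans--Krylov and Schauder theory, give uniform $C^{3,\alpha}$ bounds. These rely only on the $C^1$ bounds and the upper and lower bounds on the right-hand side. Passing to a subsequence, $\tilde\rho_\varepsilon\to\rho$ and $\gamma_\varepsilon\to\gamma>0$, and $M$ solves \eqref{eq:1.4}. Ellipticity persists because $\sigma_k(\kappa)$ stays bounded below, so $M$ is $k$-convex.

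\textbf{Uniqueness.} Let $(\gamma_1,M_1)$ and $(\gamma_2,M_2)$ be two solutions. Dilate $M_2$ so that $\rho_2/\rho_1$ attains its maximum value $1$ at some point $x_0$. There the hypersurfaces touch, with $M_2$ inside $M_1$, and share the normal. Comparing second fundamental forms gives $\sigma_k(\kappa_2)\ge\sigma_k(\kappa_1)$ at $x_0$, while the right-hand sides are $\gamma_i$ times the same quantity. Hence $\gamma_2\ge\gamma_1$, and by symmetry $\gamma_1=\gamma_2$. With equal $\gamma$, the strong maximum principle applied to the linearized operator for $\log\rho_1-\log\rho_2$ shows this difference is constant; scale invariance holds because the degree is $-k$. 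So the two hypersurfaces coincide up to dilation.

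\textbf{Strict convexity.} This follows exactly as in Theorem \ref{Th-One}. Either run the constant rank theorem directly on $M$ under $b\le -k$ and \eqref{908998}, or deform $f$ to a constant within the class satisfying \eqref{908998} and use that the round sphere is strictly convex. Along the deformation, convexity is preserved in the limit and full rank is forced by the constant rank theorem.
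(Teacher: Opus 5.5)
Your architecture is the paper's: perturb $b$ to $b-\varepsilon$ and solve with Theorem~\ref{Th-One}, normalize, pinch $\gamma_\varepsilon$ at the extremal points of $\rho$ (the content of Theorem~\ref{thm:5.1}), and get an $\varepsilon$-independent bound on $|D\log\tilde\rho_\varepsilon|$, hence a Harnack bound. Then comes Theorem~\ref{c22}, Evans--Krylov, a convergent subsequence, and the touching and maximum-principle arguments for uniqueness of $\gamma$ and of $M$ up to dilation.

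\textbf{The gap.} It is in the one step where $q\neq0$ is used. The uniform gradient bound is only asserted, and the mechanism you propose is not the one that works. At a maximum point of $|Du|^2$ one has $u_mu_{ms}=0$. So differentiating the factor $(1+|Du|^2)^{\frac{k-q}{2}}$ on the right-hand side, which is where $\langle X,\nu\rangle^q$ enters, contributes nothing; it produces no signed term at all. The only signed good term in the computation of Theorem~\ref{c122} is $\varepsilon F|Du|^2$, and that is exactly what degenerates as $\varepsilon\to0$.

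\textbf{What works instead.} The mechanism is a mismatch of growth rates. Carry out the computation of Theorem~\ref{c122} and discard $\varepsilon F|Du|^2\geq0$. This gives, with $c_0>0$ as there,
\[
0\geq c_0(C_n^k)^{-\frac1k}F^{\frac1k}+\frac{k(n-k)}{n-k+1}(C_n^k)^{\frac1k}F^{-\frac1k}(1+|Du|^2)-|Du|\,\frac{|Df|}{f}-(3k+1),
\]
where now $F=\gamma_\varepsilon f\,\tilde\rho_\varepsilon^{-\varepsilon}(1+|Du|^2)^{\frac{k-q}{2}}$. Your two bounds on $\gamma_\varepsilon$ combine to give $(\min_{\mathbb{S}^n}\tilde\rho_\varepsilon)^{\varepsilon}\geq\min_{\mathbb{S}^n}f/\max_{\mathbb{S}^n}f$. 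Hence $\gamma_\varepsilon f\tilde\rho_\varepsilon^{-\varepsilon}$ lies between positive constants independent of $\varepsilon$, and you need to record this.

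The two good terms are then comparable to $(1+|Du|^2)^{\frac12-\frac{q}{2k}}$ and $(1+|Du|^2)^{\frac12+\frac{q}{2k}}$. Because $q\neq0$, one of these exponents exceeds $\frac12$. That term dominates the linear term $|Du|\,|Df|/f$ and bounds $|Du|$. For $q=0$ both terms are linear and nothing absorbs $|Du|\,|Df|/f$. Also, $q\leq1$ plays no role in this step; it enters only through the curvature estimate of Theorem~\ref{c22}.

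\textbf{Strict convexity.} Your first option, applying Theorem~\ref{crt} directly to $M$, is insufficient, because that theorem presupposes a positive semi-definite second fundamental form. Your second option is a valid route that differs from the paper's. Deforming $f_t^{-1/k}=1-t+tf^{-1/k}$ at the fixed exponent $b$ preserves \eqref{908998} exactly as in the proof of Theorem~\ref{Th-One}. You must still supply openness, that is, continuity in $t$ of the normalized solutions. This follows from uniqueness of $\gamma_t$ and of $M_t$ up to dilation, together with uniform estimates for the homogeneous equation (the same gradient argument with $\varepsilon=0$).

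The paper instead passes convexity through the $\varepsilon$-limit. The approximants are strictly convex by Theorem~\ref{Th-One}, so $M$ is convex and Theorem~\ref{crt} applies. That route is shorter, but it requires \eqref{908998} with $b$ replaced by $b-\varepsilon$, which does not follow automatically from \eqref{908998}. Your route avoids this check.
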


\begin{remark}
  When $-b-k=0$ and $q=0$, Guan-Lin-Ma \cite{Guan-Lin-Ma-2006} established the existence, uniqueness, and strict convexity of a $k$-convex solution to equation \eqref{eq:1.4}, along with the existence and uniqueness of the constant $\gamma$.
\end{remark}

This article is organized as follows. Section \ref{Preliminaries} collects the necessary preliminaries.
In Section \ref{Nonhomogeneous-case}, we derive the a priori estimates and establish a constant rank theorem, which guarantees the convexity of the solution and yields the existence and uniqueness result for the case $-b-q-k>0$. In Section \ref{Homogeneous-case}, the corresponding existence, uniqueness, and strict convexity results are proved for the homogeneous case $-b - q - k = 0$.

\section{Preliminaries}\label{Preliminaries}

\subsection{Star-shaped hypersurfaces in $\mathbb{R}^{n+1}$}

We recall some basic formulas for star-shaped hypersurfaces in
\(\mathbb R^{n+1}\); see \cite{Chen-Li-Wang-2018} for details.
Let $(\mathbb{R}^{n+1},\langle \cdot,\cdot\rangle,\bar{\nabla})$ be the $(n+1)$-dimensional Euclidean space equipped with its standard inner product and flat connection, which can be expressed as a warped product manifold $(0,+\infty)\times\mathbb{S}^n$, with the metric
\begin{equation*}
  \langle \cdot,\cdot\rangle=dr^2+r^2\sigma,
\end{equation*}
where $r$ is the distance to a fixed point $o\in\mathbb{R}^{n+1}$ and $\sigma$ is the standard metric of the unit sphere $\mathbb{S}^n$.
The Levi-Civita connection of $\sigma$ on $\mathbb{S}^n$ will be denoted $D$.
The gradient vector field of the function $r$ is denoted by $\partial_r$.

Let $M\subset\mathbb{R}^{n+1}$ be a star-shaped hypersurface with respect to the origin. Then there exists a function $\rho:\mathbb{S}^n\to(0,\infty)$ such that
\[
M=\{X\in\mathbb{R}^{n+1}: X=\rho(x)\,x, \; x\in\mathbb{S}^n\}.
  \]
Take $\{e_1,\dots,e_n\}$ to be a local orthonormal frame field on $\mathbb{S}^n$.
For convenience, we set $\rho_i = D_{e_i} \rho$ and $\rho_{ij} = D^2\rho(e_i, e_j)$. The tangent space of $M$ is spanned at each point by the vectors
\begin{equation*}
X_{i}=\rho e_{i}+\rho_i \partial_r,\quad 1\leq i\leq n.
\end{equation*}
The outward unit normal vector of $M$ is given by
\begin{equation}\label{Nor}
\nu=\frac{1}{v}\left(\partial_r-\frac{1}{\rho^2} \sum_{j=1}^{n}\rho_j e_j\right),
\end{equation}
where $v=\sqrt{1+\rho^{-2}|D \rho|^2}$.
The induced metric $g$ from $\langle \cdot,\cdot\rangle$ has components
\begin{equation}\label{gij}
g_{ij}=\rho^2\delta_{ij}+\rho_i \rho_j.
\end{equation}
Its inverse is
\begin{equation}\label{gij2}
g^{ij}
=
\frac{1}{\rho^2}
\left(
\delta_{ij}
-
\frac{\rho_i\rho_j}{\rho^2v^2}
\right).
\end{equation}
 The second
fundamental form is
\begin{equation}\label{hij}
h_{ij}
=
-\langle \bar\nabla_{X_i}X_j,\nu\rangle
=
\frac{1}{v}
\left(
-\rho_{ij}
+\rho\delta_{ij}
+\frac{2}{\rho}\rho_i\rho_j
\right),
\end{equation}
where \(\bar\nabla\) is the Euclidean connection.
Let $h^i_j=g^{is}h_{sj}$, then
\begin{equation}\label{h_ij}
h^{i}_{j}=\frac{1}{\rho
v}\left(\delta_{ij}+\sum_{k=1}^{n}(-\delta_{ik}+\frac{\rho_i
\rho_k}{\rho^2v^2})(\log \rho)_{jk}\right).
\end{equation}
Let \(\nabla\) denote the Levi-Civita connection of \(g\).
The following identities will be used repeatedly. 
For hypersurfaces in
\(\mathbb R^{n+1}\), the Codazzi equation and the Ricci commutation
formula give
\begin{align}
\nabla_{k}h_{ij}&=\nabla_{j}h_{ik}, \\
\nabla_{i}\nabla_{j}h_{kl}
=\nabla_{k}\nabla_{l}h_{ij}&+h^{m}_{j}
(h_{il}h_{km}-h_{im}h_{kl})+h^{m}_{l}(h_{ij}h_{km}-h_{im}h_{kj}).\label{h-ij-ji}
\end{align}
Fix $X\in M$, and choose a local orthonormal frame $\{E_1,\dots,E_n\}$ around $X$ in $M$ such that $\nabla_{E_i}E_j(X)=0$. At $X$, we have
\begin{align}
(|X|^{2})_{i} &= 2\langle X, E_{i}\rangle, \label{xi} \\
(|X|^{2})_{ij} &= 2\delta_{ij} - 2h_{ij}\langle X, \nu\rangle, \label{xij} \\
\langle X, \nu\rangle_{i} &= h_{ik}\langle X, E_{k}\rangle, \label{ui} \\
\langle X, \nu\rangle_{ij} &= h_{ijk}\langle X, E_{k}\rangle + h_{ij} - h_{ik}h_{kj}\langle X, \nu\rangle. \label{uij}
\end{align}

\subsection{$k$-th elementary symmetric functions}
Let $\lambda=(\lambda_1,\dots,\lambda_n)\in\mathbb{R}^n$, we recall
the definition of elementary symmetric functions for $1\leq k\leq n$,
\begin{equation*}
\sigma_k(\lambda)= \sum _{1 \le i_1 < i_2 <\cdots<i_k\leq
n}\lambda_{i_1}\lambda_{i_2}\cdots\lambda_{i_k}.
\end{equation*}

Let $U$ be a matrix and $\lambda(U)$ be the eigenvalues of $U$.
Throughout this paper, we simply write $\sigma(\lambda(U))$ as $\sigma(U)$, and we denote $\sigma_{k-1}(\lambda|i)=\frac{\partial
\sigma_k}{\partial \lambda_i}$ and
$\sigma_{k-2}(\lambda|ij)=\frac{\partial^2 \sigma_k}{\partial
\lambda_i\partial \lambda_j}$.
In this subsection, we will present some basic properties of the $k$-th elementary symmetric functions, which are useful for the calculations later.

\begin{proposition}\label{sigma}
\cite{Chou-Wang-2001,Hou-Ma-Wu-2010,Hui99} Let $\lambda=(\lambda_1,\dots,\lambda_n)\in\mathbb{R}^n$ and $1\leq
k\leq n$. Then
\begin{enumerate}
\item[(\romannumeral1)]  $\Gamma_1\supset \Gamma_2\supset \cdot\cdot\cdot\supset
\Gamma_n$;

\item[(\romannumeral2)]  $\sigma_{k-1}(\lambda|i)>0$ for $\lambda \in \Gamma_k$ and
$1\leq i\leq n$;

\item[(\romannumeral3)]  $\sigma_k(\lambda)=\sigma_k(\lambda|i)
+\lambda_i\sigma_{k-1}(\lambda|i)$ for $1\leq i\leq n$;

\item[(\romannumeral4)] If $\lambda_1\geq \lambda_2\geq \cdot\cdot\cdot\geq \lambda_n$,
then $$\sigma_{k-1}(\lambda|1)\leq \sigma_{k-1}(\lambda|2)\leq
\cdot\cdot\cdot\leq \sigma_{k-1}(\lambda|n)~~~\text{ for}~~~\lambda \in
\Gamma_k;$$

\item[(\romannumeral5)]
$\sum_{i=1}^{n}\sigma_{k-1}(\lambda|i)=(n-k+1)\sigma_{k-1}(\lambda)$;

\item[(\romannumeral6)]
If $\lambda_1\geq \lambda_2\geq \cdot\cdot\cdot\geq \lambda_n$,
then
$\lambda_1 \sigma_{k-1}(\lambda |1) \geq \frac{k}{n}\sigma_k(\lambda)$;

\item[(\romannumeral7)]
$\sum_{i=1}^{n}\lambda_{i}^2\sigma_k(\lambda | i)=\sigma_1(\lambda)\sigma_{k+1}(\lambda)-(k+2)\sigma_{k+2}(\lambda)$.
\end{enumerate}
\end{proposition}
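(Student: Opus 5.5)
The statement just above is Proposition \ref{sigma}, a list of standard facts about elementary symmetric functions, so the proof strategy is to verify each item directly from the definition of $\sigma_k$ and the generating-function identity
\[
\prod_{i=1}^n(1+t\lambda_i)=\sum_{j=0}^n\sigma_j(\lambda)t^j ,
\]
with an induction on $n$ for the cone statements (ii) and (iv). Item (i) is immediate from the definition of $\Gamma_k$ as an intersection of the sets $\{\sigma_j>0\}$ for $j\le k$. Item (iii) follows by splitting the sum defining $\sigma_k$ into monomials that contain $\lambda_i$ and those that do not. Item (v) follows because each $(k-1)$-subset avoiding a given index is counted once for each of the $n-k+1$ indices it omits.

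For (ii), I would use the classical fact that $\Gamma_k$ is the connected component of $\{\sigma_k>0\}$ containing the positive cone $\Gamma_n$. I would then argue by induction on $k$ and $n$ that $(\lambda|i)\in\Gamma_{k-1}$ whenever $\lambda\in\Gamma_k$. One route is to note that $\sigma_k^{1/k}$ is concave on $\Gamma_k$ (Gårding). Alternatively, one can use Newton--Maclaurin-type inequalities to show $\sigma_{k-1}(\lambda|i)>0$. The cleanest path is Gårding's theory of hyperbolic polynomials: $\sigma_k$ is hyperbolic with respect to $(1,\dots,1)$, and its derivative in the direction $e_i$ is again hyperbolic, with a cone containing $\Gamma_k$.

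For (iv), I would use the identity
\[
\sigma_{k-1}(\lambda|j)-\sigma_{k-1}(\lambda|i)=(\lambda_i-\lambda_j)\,\sigma_{k-2}(\lambda|ij),
\]
which comes from expanding both terms via (iii) applied to $(\lambda|ij)$. The claim then reduces to showing $\sigma_{k-2}(\lambda|ij)\ge 0$ for $\lambda\in\Gamma_k$, which is (ii) applied twice.

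For (vi), I would write
\[
\sigma_k=\lambda_1\sigma_{k-1}(\lambda|1)+\sigma_k(\lambda|1).
\]
If $\sigma_k(\lambda|1)\le0$ the bound is immediate. Otherwise I would use (v) together with the ordering from (iv) and the Newton--Maclaurin inequalities to compare $\sigma_k(\lambda|1)$ with $\frac{n-k}{k}\lambda_1\sigma_{k-1}(\lambda|1)$. This is the main obstacle, and here I would follow Chou--Wang's argument. Item (vii) is an algebraic identity. Summing $\lambda_i^2\sigma_k(\lambda|i)$ and using (iii) twice gives
\[
\lambda_i^2\sigma_k(\lambda|i)=\lambda_i\sigma_{k+1}-\lambda_i\sigma_{k+1}(\lambda|i).
\]
Then $\sum_i\lambda_i\sigma_{k+1}(\lambda|i)=(k+2)\sigma_{k+2}$ by counting monomials, which yields the stated formula. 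In the paper itself these are simply cited from \cite{Chou-Wang-2001,Hou-Ma-Wu-2010,Hui99}.
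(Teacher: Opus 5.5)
The paper gives no proof of this proposition and only cites Chou--Wang, Hou--Ma--Wu and Huisken--Sinestrari, so your proposal differs from it simply by supplying arguments. The arguments you actually carry out are correct:
\begin{itemize}
\item (i), (iii), (v) and (vii) follow by monomial counting; in (vii) a single use of (iii) at level $k+1$ suffices.
\item For (iv), the identity $\sigma_{k-1}(\lambda|j)-\sigma_{k-1}(\lambda|i)=(\lambda_i-\lambda_j)\sigma_{k-2}(\lambda|ij)$ correctly reduces the claim to the stronger form of (ii), namely $(\lambda|i)\in\Gamma_{k-1}$ for $\lambda\in\Gamma_k$. You rightly noticed that (ii) as literally stated does not iterate.
\item If you derive that stronger form from G\aa rding's theory, note that for $k\ge 2$ the direction $e_i$ lies on the boundary of $\Gamma_k$, not inside it. You therefore need the boundary-direction version of the derivative lemma, obtained by a limiting argument, or you can simply cite the standard lemma.
\end{itemize}

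On (vi), two points.

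First, the item as written omits the hypothesis $\lambda\in\Gamma_k$, and it is false without it. For $n=3$, $k=2$, $\lambda=(0,-1,-1)$ one has $\lambda_1\sigma_1(\lambda|1)=0<\frac23=\frac kn\sigma_2(\lambda)$. Your argument uses this hypothesis tacitly: the case $\sigma_k(\lambda|1)\le 0$ is only immediate because $\sigma_k(\lambda)>0$. State it explicitly.

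Second, the step you defer to Chou--Wang closes in two lines, with no case split and without (iv) or (v). Indeed (iv) combined with (v) bounds $\sigma_{k-1}(\lambda|1)$ from above, which is the wrong direction.
\begin{itemize}
\item Put $\mu=(\lambda|1)$. By the stronger form of (ii), $\mu\in\Gamma_{k-1}\subset\mathbb{R}^{n-1}$.
\item Set $S_j=\sigma_j(\mu)/C_{n-1}^j$, so that $S_0,\dots,S_{k-1}>0$.
\item For $2\le k\le n-1$, use Newton's inequality $S_{k-2}S_k\le S_{k-1}^2$ and the Maclaurin chain $S_{k-1}/S_{k-2}\le\cdots\le S_1/S_0$. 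Since $S_1$ is the mean of $\lambda_2,\dots,\lambda_n$, hence at most $\lambda_1$, these give
\[
S_k\le S_{k-1}\,\frac{S_{k-1}}{S_{k-2}}\le S_{k-1}S_1\le \lambda_1 S_{k-1}.
\]
\item Multiply by $C_{n-1}^k$ and use $C_{n-1}^k/C_{n-1}^{k-1}=\frac{n-k}{k}$. This yields $\sigma_k(\lambda|1)\le\frac{n-k}{k}\lambda_1\sigma_{k-1}(\lambda|1)$, which is exactly the inequality you reduced (vi) to.
\item The cases $k=1$ and $k=n$ are trivial.
\end{itemize}
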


The generalized Newton-MacLaurin inequality is as follows.
\begin{proposition}\label{NM}
\cite{Spruck-2005} If $\lambda \in \Gamma_m$, $m > l \geq 0$, $ r > s \geq 0$, $m
\geq r$ and $l \geq s$, then
\begin{align}
\left(\frac{{\sigma _m (\lambda )}/{C_n^m }}{{\sigma _l (\lambda
)}/{C_n^l }}\right)^{\frac{1}{m-l}} \le \left(\frac{{\sigma _r
(\lambda )}/{C_n^r }}{{\sigma _s (\lambda )}/{C_n^s
}}\right)^{\frac{1}{r-s}}. \notag
\end{align}
\end{proposition}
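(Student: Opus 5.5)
The statement to address is Proposition~\ref{NM}, the generalized Newton--MacLaurin inequality. The plan is to reduce it to the classical Newton--MacLaurin inequality for consecutive ratios. Write $E_j(\lambda)=\sigma_j(\lambda)/C_n^j$, with $E_0=1$. The key fact is that for $\lambda\in\Gamma_m$ the sequence $E_0,E_1,\dots,E_m$ is positive and log-concave:
\[
E_{j-1}E_{j+1}\le E_j^2,\qquad 1\le j\le m-1 .
\]
This is Newton's inequality, valid for all real $\lambda$. Positivity of $E_1,\dots,E_m$ is exactly the definition of $\Gamma_m$.

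Log-concavity together with positivity means the consecutive ratios $q_j:=E_j/E_{j-1}$, for $j=1,\dots,m$, are well defined and nonincreasing in $j$.

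For integers $0\le s<r$ we then have
\[
\Big(\frac{E_r}{E_s}\Big)^{\frac1{r-s}}=\Big(\prod_{j=s+1}^{r}q_j\Big)^{\frac1{r-s}},
\]
which is the geometric mean of $q_{s+1},\dots,q_r$. Likewise the left-hand side of the proposition is the geometric mean of $q_{l+1},\dots,q_m$.

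The claim therefore reduces to an elementary fact. If $q_j$ is nonincreasing, the geometric mean over the index window $\{l+1,\dots,m\}$ is at most the geometric mean over $\{s+1,\dots,r\}$, whenever $l\ge s$ and $m\ge r$, i.e. when the first window lies further to the right at both endpoints. I would prove this in two moves.
\begin{itemize}
\item Shifting the right endpoint from $r$ up to $m$ only appends terms $q_j$ with $j>r$. Each such term is $\le q_r\le$ every term already in the window, so the geometric mean does not increase.
\item Shifting the left endpoint from $s$ up to $l$ removes the largest terms, which again does not increase the mean.
\end{itemize}
Since $m>l$, the final window is nonempty, and every intermediate window can be chosen nonempty: if $l\ge r$, first extend to $\{s+1,\dots,m\}$ and then trim. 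Each move is the statement that averaging in smaller numbers, or deleting larger ones, lowers an average. Taking logarithms, this is just monotonicity of arithmetic means of a nonincreasing sequence.

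The main point needing care is Newton's inequality itself, if it is not simply quoted. I would prove it by the standard argument. Apply Rolle's theorem repeatedly to $P(t)=\prod(t+\lambda_i)=\sum C_n^jE_jt^{n-j}$, differentiating and passing to reciprocal polynomials to reduce to a real-rooted quadratic. Its discriminant being nonnegative gives $E_{j-1}E_{j+1}\le E_j^2$.

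A second subtlety is making sure positivity holds up to index $m$. This is where $\lambda\in\Gamma_m$ is used, with Proposition~\ref{sigma}(i) guaranteeing the nested cone condition. It ensures all divisions and fractional powers above are legitimate.
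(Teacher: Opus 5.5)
Your proof is correct and complete. Newton's inequality $E_{j-1}E_{j+1}\le E_j^2$, together with positivity of $E_1,\dots,E_m$ on $\Gamma_m$, makes $q_j=E_j/E_{j-1}$ nonincreasing for $1\le j\le m$, and your extend-then-trim comparison of geometric means over the windows $\{s+1,\dots,r\}\to\{s+1,\dots,m\}\to\{l+1,\dots,m\}$ is valid, since every window stays nonempty because $l<m$. The paper gives no proof of its own and only cites Spruck; your argument is the standard derivation of the generalized inequality from Newton's inequality, with one superfluous step: the appeal to Proposition~\ref{sigma}(i) is unnecessary, because positivity of $\sigma_1,\dots,\sigma_m$ is the definition of $\Gamma_m$.
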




\section{Existence and uniqueness in nonhomogeneous case}\label{Nonhomogeneous-case}

In this section, we first derive a priori estimates and then use them to obtain the existence and uniqueness of a $k$-convex hypersurface for equation \eqref{G-eqk} in the case $-b-q-k>0$. Next, under Assumption \eqref{908998}, the constant rank theorem (Theorem \ref{crt}) is applied to prove the strict convexity of the obtained $k$-convex hypersurface.

Let $M$ be a star-shaped hypersurface satisfying equation \eqref{G-eqk}. Then equation \eqref{G-eqk} can be rewritten as
\begin{equation}\label{eq-bqk-geq-0}
\sigma_k(\kappa)=f \rho^{b+2q} (\rho^2 + |D \rho|^2)^{-\frac{q}{2}}, \quad \text{on } \quad \mathbb{S}^n.
\end{equation}
We now establish the a priori estimates for equation \eqref{eq-bqk-geq-0}.

\subsection{The a priori estimates}

\begin{theorem}\label{c0}
Let $1\leq k <n $ and $-b-q-k>0$. Assume that $f \in C^2(\mathbb{S}^n)$ is a positive function
and that $M$ is a $k$-convex star-shaped hypersurface satisfying  equation \eqref{eq-bqk-geq-0}.
Then
$$
\left(\frac{ \min_{ \mathbb{S}^n} f}{C_n^k }\right)^{\frac{1}{-b-q-k}}
\leq \rho(x)
\leq \left(\frac{ \max_{ \mathbb{S}^n} f}{C_n^k }\right)^{\frac{1}{-b-q-k}},       \quad \forall~x \in \mathbb{S}^n.
$$
\end{theorem}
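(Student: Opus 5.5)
This is a $C^0$ estimate, and I would prove it by the maximum principle at the extremal points of $\rho$. Let $x_0\in\mathbb{S}^n$ be a maximum point of $\rho$. There $D\rho=0$, so $v=1$, and $D^2\rho\le 0$. Since $D\rho(x_0)=0$, the inverse metric \eqref{gij2} is $g^{ij}=\rho^{-2}\delta_{ij}$ at $x_0$. Formula \eqref{h_ij} then gives
\[
h^i_j=\frac{1}{\rho}\Big(\delta_{ij}-\frac{\rho_{ij}}{\rho}\Big),
\]
with $-\rho_{ij}\ge0$. Hence at $x_0$ the matrix $(h^i_j)$ dominates $\rho^{-1}I$.

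The key step is to compare $\sigma_k$ of this matrix with $\sigma_k(\rho^{-1}I)=C_n^k\rho^{-k}$. Since $\kappa\in\Gamma_k$ and $\sigma_k$ is increasing on $\Gamma_k$ (by Proposition \ref{sigma}(ii)), I would argue along the segment from $\rho^{-1}I$ to $(h^i_j)$. This segment lies in $\Gamma_k$, because adding a nonnegative matrix to the positive matrix $\rho^{-1}I$ keeps all eigenvalues positive. Monotonicity along it gives
\[
\sigma_k(\kappa)(x_0)\ge C_n^k\rho(x_0)^{-k}.
\]
Equivalently, diagonalize $-\rho_{ij}\ge0$; then the eigenvalues of $(h^i_j)$ are each at least $\rho^{-1}$. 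Incidentally, at a maximum point all curvatures are positive anyway.

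Next I plug this into \eqref{eq-bqk-geq-0}. At $x_0$ the right-hand side equals $f\rho^{b+2q}\rho^{-q}=f\rho^{b+q}$. Therefore
\[
C_n^k\rho^{-k}\le f(x_0)\rho^{b+q}, \quad\text{that is,}\quad \rho(x_0)^{-b-q-k}\le \frac{f(x_0)}{C_n^k}\le\frac{\max f}{C_n^k}.
\]
Since $-b-q-k>0$, taking the power $\frac{1}{-b-q-k}$ preserves the inequality. This gives the upper bound on $\max\rho$, hence on $\rho(x)$ for all $x$.

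The lower bound is symmetric. At a minimum point $x_1$ we again have $D\rho=0$, but now $D^2\rho\ge0$, so $(h^i_j)\le\rho^{-1}I$. Here the order comparison needs more care, since the eigenvalues of $(h^i_j)$ need not be positive. Still, both $\kappa$ and $\rho^{-1}(1,\dots,1)$ lie in $\Gamma_k$, and $\kappa_i\le\rho^{-1}$ in a common diagonal basis, because $(h^i_j)$ is a multiple of $I$ minus a symmetric matrix. The segment between them lies in the convex cone $\Gamma_k$, and $\sigma_k$ is monotone in each coordinate there. Hence
\[
\sigma_k(\kappa)\le C_n^k\rho^{-k},
\]
which yields $C_n^k\rho^{-k}\ge f\rho^{b+q}$ at $x_1$. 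So $\rho(x_1)^{-b-q-k}\ge \min f/C_n^k$, which is the lower bound.

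The only genuinely delicate point is this monotonicity step at the minimum, which rests on the convexity of $\Gamma_k$ and on Proposition \ref{sigma}(ii) along the segment. Everything else is direct evaluation of \eqref{h_ij} and \eqref{eq-bqk-geq-0} at critical points.
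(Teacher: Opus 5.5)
Your proof is correct and follows the same route as the paper. At a maximum point and a minimum point of $\rho$ you have $D\rho=0$, so the right-hand side of \eqref{eq-bqk-geq-0} reduces to $f\rho^{b+q}$, and by \eqref{h_ij} you compare $\kappa$ with $\rho^{-1}(1,\dots,1)$. At the minimum point you justify the comparison explicitly, via monotonicity of $\sigma_k$ along a segment in the convex cone $\Gamma_k$ using $\sigma_{k-1}(\lambda|i)>0$; this supplies the step the paper covers only with ``a similar argument yields.''
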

\begin{proof}
Assume $\rho$ attains its maximum value at $x_0 \in \mathbb{S}^n$. Then $D \rho(x_0) = 0$ and $D^2 (\log \rho)(x_0) \leq 0$.
By \eqref{h_ij}, the principal curvatures $\kappa=(\kappa_1,\dots,\kappa_n)$ of $M$ satisfy
$$\kappa_i(x_0) \geq (\max\limits_{\mathbb{S}^n}\rho)^{-1}, \quad  \forall~~1 \leq i \leq n.$$
Therefore,
\begin{equation*}
C_n^k \left( \max\limits_{\mathbb{S}^n}\rho \right)^{-k}
\le \sigma_k(\kappa)
= f\,\rho^{b+2q}\bigl(\rho^2+|D \rho|^2\bigr)^{-q/2}
\le \bigl(\max_{\mathbb S^n}f\bigr)\bigl(\max_{\mathbb S^n}\rho\bigr)^{b+q},
\end{equation*}
which implies that
$$(\max_{\mathbb{S}^n} \rho)^{-b-q-k} \leq (C_n^k)^{-1} \max_{ \mathbb{S}^n} f.$$
A similar argument yields
 $$(C_n^k)^{-1}\min_{ \mathbb{S}^n} f \leq (\min_{\mathbb{S}^n} \rho)^{-b-q-k}.$$
\end{proof}

Next, we establish the gradient estimates. Let $B=(B^{ij})$ be the square root of the positive definite matrix $(g^{ij})$. To simplify the calculation, we denote
\begin{equation*}
  u=-\log \rho,\quad
  \bar{g}^{i j} =\delta_{i j}-\frac{u_{i} u_{j}}{\sqrt{1+|D u|^{2}}\big(1+\sqrt{1+|D u|^{2}}\big)} ,\quad
  \bar{h}_{l m} =\delta_{l m}+u_{l} u_{m}+u_{l m}.
\end{equation*}
Then $B^{ij}=e^u \bar{g}^{ij}$ and $h_{ij}=e^{-u}(1+|D u|^2)^{-\frac{1}{2}}\bar{h}_{ij}$. Since $\kappa=(\kappa_1,\dots,\kappa_n) $ are the eigenvalues of the matrix $ (B^{il}h_{lm}B^{mj}) $, we obtain
\begin{equation}\label{kappa-B-h}
  \sigma_k(\kappa)=\sigma_k(B^{il}h_{lm}B^{mj})=e^{ku}(1+|D u|^2)^{-\frac{k}{2}}\sigma_k(a_{ij}),
\end{equation}
where $(a_{ij})=(\bar{g}^{il}\bar{h}_{lm}\bar{g}^{mj})$. Let $F(a_{ij})=\sigma_{k}(a_{ij})$.
It follows from \eqref{kappa-B-h} that equation \eqref{eq-bqk-geq-0} is equivalent to
\begin{equation}\label{c121}
F(a_{ij})=fe^{(-b-q-k)u}\left(1+|D u|^{2}\right)^{\frac{k-q}{2}}, \quad\text{on}\quad\mathbb{S}^n.
\end{equation}
With these preparations, we establish the following gradient estimates:

\begin{theorem}\label{c122}
Let $1\leq k <n $ and $-b-q-k>0$. Suppose that $f \in C^2(\mathbb{S}^n)$ is a positive function
and that $M$ is a $k$-convex star-shaped hypersurface satisfying equation \eqref{c121}.
Then
$$\max_{\mathbb{S}^{n}}| D \log\rho|\leq C,$$
where $C$ is a constant depending on $n, b, q, k$ and $\max_{\mathbb{S}^{n}} \frac{| D f |}{f}$. In particular,
$$1\leq\frac{\max_{\mathbb{S}^{n}}\rho}{\min_{\mathbb{S}^{n}}\rho}\leq e^C.$$
\end{theorem}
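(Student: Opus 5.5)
We argue by the maximum principle applied to $\phi=\frac12|Du|^2$, where $u=-\log\rho$. Suppose $\phi$ attains its maximum at $x_0\in\mathbb{S}^n$ with $|Du|(x_0)$ large. We rotate the orthonormal frame at $x_0$ so that $u_1=|Du|$ and $u_i=0$ for $i\ge2$. The first-order condition $\phi_i=\sum_l u_lu_{li}=0$ then gives $u_{1i}=0$ for all $i$. After a further rotation in $e_2,\dots,e_n$ we may take $(u_{ij})$ diagonal at $x_0$. With these choices $\bar g^{ij}$ is diagonal at $x_0$, with $\bar g^{11}=(1+|Du|^2)^{-1/2}$ and $\bar g^{ii}=1$ for $i\ge2$. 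The matrix $\bar h$ is also diagonal, with $\bar h_{11}=1+u_1^2$ and $\bar h_{ii}=1+u_{ii}$. Hence $a_{11}=1$ and $a_{ii}=1+u_{ii}$ for $i\ge2$. The second-order condition is $\phi_{ij}\le0$.

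Next we differentiate equation \eqref{c121} in the direction $e_1$. Write $\log F(a)=\log f+(-b-q-k)u+\frac{k-q}{2}\log(1+|Du|^2)$. Taking the $e_1$-derivative and contracting with $F^{ij}=\partial F/\partial a_{ij}$, which is positive definite by Proposition \ref{sigma}(ii), gives an expression for $F^{ij}a_{ij,l}u_l$. Here $a_{ij,l}$ involves $u_{ijl}$, together with lower order terms from differentiating $\bar g$ and the $u_iu_j$ term. We then use the commutation formula on $\mathbb{S}^n$,
\[
u_{ij1}=u_{1ij}+u_1\delta_{ij}-u_j\delta_{1i},
\]
to convert third derivatives into $\phi_{ij}$ terms: at $x_0$, $\phi_{ij}=\sum_l u_lu_{lij}+\sum_l u_{li}u_{lj}$. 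Contracting with the suitable tensor $\bar g F^{ij}\bar g$ and using $\phi_{ij}\le0$ yields an inequality of the form
\[
0\ge \sum_{i} F^{ii}\bigl(u_{ii}^2+u_1^2(\cdots)\bigr)+u_1^2(-b-q-k)F+(k-q)\frac{u_1^2}{1+u_1^2}(\cdots)F-u_1\frac{|f_1|}{f}F+\dots
\]
The decisive term is $(-b-q-k)u_1^2F$, which is positive because $-b-q-k>0$. We must check that every other term is either nonnegative or bounded by $C u_1 F$ or by $C F$. For the positive $\sigma_k$ terms we use $\sum_iF^{ii}a_{ii}=kF$ and the identity $\sum_i F^{ii}=(n-k+1)\sigma_{k-1}$. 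The dangerous contributions come from the differentiation of $\bar g^{11}$ and of $(1+|Du|^2)^{\frac{k-q}{2}}$. These carry factors such as $u_1^2/(1+u_1^2)$ and produce terms proportional to $(k-q)F$ and $F^{11}u_1^2$, of order $u_1^2F$. Here the hypothesis $q\le1\le k$, so that $k-q\ge0$, should fix their sign favorably, or at least make them absorbable.

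Granting this, we get $(-b-q-k)u_1^2F\le C(1+u_1)F$ with $C$ depending on $n,b,q,k$ and $\max|Df|/f$. This bounds $u_1(x_0)$, and hence $\max|D\log\rho|$. The ratio bound then follows by integrating along a great-circle arc of length at most $\pi$: $|\log\rho(x)-\log\rho(y)|\le \pi C$. Renaming the constant gives $\max\rho/\min\rho\le e^C$.

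The main obstacle is the bookkeeping in the differentiated equation. We must show that the terms coming from the $u$-dependence of $\bar g^{ij}$ and from the factor $(1+|Du|^2)^{\frac{k-q}{2}}$ do not produce a negative term of order $u_1^2 F$ that could beat $(-b-q-k)u_1^2F$. If the direct test function $\frac12|Du|^2$ leaves a bad term, we would first try $\log(1+|Du|^2)$ or $\frac12|Du|^2+\epsilon u$. Since $u$ is already bounded by Theorem \ref{c0}, this modification costs nothing and contributes a term $\epsilon\sum F^{ii}u_{ii}$ that can be controlled by $kF$.
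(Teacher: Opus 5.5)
Your skeleton is the same as the paper's: maximize $|Du|^2$, get $u_{1i}=0$ at $x_0$, diagonalize, differentiate \eqref{c121} in $e_1$, and contract with $\widetilde G^{ij}=F^{lm}\bar g^{il}\bar g^{mj}$. But the step you ``grant'' is wrong in two ways.

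\textbf{The terms you flag do not exist.} Since $u_{1i}=0$ for every $i$ at $x_0$, every $e_1$-derivative of a function of $Du$ vanishes there. So $\partial_1\bar g^{ij}=0$, $\partial_1(u_iu_j)=0$, and $\partial_1(1+|Du|^2)^{\frac{k-q}{2}}=(k-q)(1+|Du|^2)^{\frac{k-q}{2}-1}u_1u_{11}=0$. The differentiated equation is simply $F^{ij}a_{ij,1}=F\bigl[(-b-q-k)u_1+f_1/f\bigr]$ with $a_{ij,1}=\bar g^{il}u_{lm1}\bar g^{mj}$. Your proposed remedy, fixing the sign of $k-q$ via $q\le 1$, uses a hypothesis Theorem \ref{c122} does not have. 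Only $-b-q-k>0$ is assumed, so $k-q$ may be negative.

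\textbf{Your commutation formula has the wrong sign.} You use the last-index-is-last-derivative convention in $\phi_{ij}=\sum_l u_lu_{lij}+\sum_l u_{li}u_{lj}$. With that convention the Ricci identity on $\mathbb{S}^n$ gives $u_{ij1}=u_{1ij}-u_1\delta_{ij}+u_j\delta_{1i}$. You can check this on $u=a\cdot x$, for which $D^2u=-u\,\sigma$ and $u_{ijk}=-u_k\delta_{ij}$. With your sign, the commutator contributes $-u_1^2\sum_{i\ge2}\widetilde G^{ii}$. This term has size $u_1^2\sigma_{k-1}$ and cannot be absorbed by $(-b-q-k)u_1^2\sigma_k$, because $\sigma_{k-1}$ is not bounded by a multiple of $\sigma_k$ on $\Gamma_k$. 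So the argument as written fails at the commutator, not where you were looking.

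\textbf{The fix.} With the correct sign the commutator gives $+u_1^2\sum_{i\ge2}\widetilde G^{ii}\ge 0$, and $\widetilde G^{ij}\phi_{ij}\le 0$ becomes
\[
0\ge\sum_{i\ge2}F^{ii}u_{ii}^2+u_1^2\sum_{i\ge2}F^{ii}+(-b-q-k)F u_1^2-F u_1\frac{|Df|}{f}.
\]
Dropping the first two terms gives $|Du|\le\frac{1}{-b-q-k}\max_{\mathbb{S}^n}\frac{|Df|}{f}$ directly. No modified test function is needed. Adding $\epsilon u$ would in fact destroy $u_{1i}=0$, which is exactly what makes the bookkeeping trivial.

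\textbf{Comparison with the paper.} Once corrected, your route is genuinely shorter than the paper's. The paper rewrites $\sum_{i\ge2}F^{ii}u_{ii}^2$ through $a_{ii}=1+u_{ii}$, which creates a $-2kF$ term. It then beats that term using Newton--MacLaurin, inequality (3.33) of \cite{Guan-Lin-Ma-2006} (this is where $k<n$ enters), and AM--GM between $F^{1/k}$ and $F^{-1/k}(1+|Du|^2)$. There, $(-b-q-k)F|Du|^2$ is spent only to complete the square against the $Df$ term. The payoff of the longer route is that it survives in Theorem \ref{c1225}, where $-b-q-k$ is replaced by $\varepsilon\to0$. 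Your corrected bound would degenerate like $1/\varepsilon$ in that setting.
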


\begin{proof}

Define the function $Q=|D u|^{2}$, and assume that $Q$ attains its maximum at $x_{0}\in\mathbb{S}^n$. Choose a local coordinate frame field $\{e_1,\dots,e_n\}$ around $x_{0}$ such that $D_{e_i}e_j(x_0)=0$, $u_{1}(x_0)=|D u|$, and $(u_{rs}(x_0))_{2\leq r,s\leq n}$ is diagonal.
In the following, all computations are performed at $  x_0  $.  Since \(Q\) attains
its maximum at \(x_0\), one has
$$0=Q_{i}=2\sum_{m=1}^n u_{m}u_{mi}=2u_{1}u_{1i},$$
it implies that
$ u_{1i}=0~~~\text{for all}~~~i=1,\dots,n.$
Therefore, $(u_{ij})$, $(\bar{g}^{ij})$, $(\bar{h}_{lm})$, and $(a_{ij})$ are all diagonal at $x_{0}$. Direct computation shows that
\begin{equation*}
  \bar{g}^{11}=(1+|D u|^{2})^{-\frac{1}{2}},\quad\bar{h}_{11}=1+|D u|^{2},\quad a_{11}=1,
\end{equation*}
and
\begin{equation*}
  \bar{g}^{ii}=1,\quad\bar{h}_{ii}=a_{ii}=1+u_{ii},\quad\text{for}\quad 2\leq i\leq n.
\end{equation*}
Without loss of generality, assume  that $u_{22}\geq\cdots\geq u_{nn}$.
Let ${F}^{ij}=\frac{\partial F}{\partial a_{ij}}$. Then $(F^{ij})$ is diagonal at $x_{0}$.
By equation \eqref{c121},
\begin{equation}\label{c111a}
\begin{aligned}
{F}^{ij}a_{ijs}=e^{(-b-q-k)u}(1+|D u|^{2})^{\frac{k-q}{2}}\left[(-b-q-k)fu_{s}+f_{s}\right].
\end{aligned}
\end{equation}
Since $u_s(\bar{g}^{il})_s=u_s (\bar{g}^{mj})_s=0$, we obtain
$$u_{s}a_{ijs}=u_{s}(\bar{g}^{il}\bar{h}_{lm}\bar{g}^{mj})_{s}
=\bar{g}^{il}u_{s}\bar{h}_{lms}\bar{g}^{mj}
=\bar{g}^{il}u_{s}u_{lms}\bar{g}^{mj},$$
which implies that
\begin{equation}\label{c111b}
\begin{aligned}
\sum^n_{s=1}u_{s}{F}^{ij}a_{ijs}
=&\sum^n_{s=1}{F}^{ij}\bar{g}^{il}u_{s}
\left(u_{slm}-u_{s}\delta_{lm}+u_{m}\delta_{ls}\right)\bar{g}^{mj}\\
=&\sum^n_{s=1}{F}^{ij}\bar{g}^{il}\bar{g}^{mj}u_{s}u_{slm}-|D u|^{2}{F}^{ij}\bar{g}^{il}\bar{g}^{lj}+{F}^{ij}\bar{g}^{il}\bar{g}^{mj}u_{m}u_{l}.
\end{aligned}
\end{equation}
Set $\widetilde{G}^{ij}={F}^{lm}\overline{g}^{il}\overline{g}^{mj}$. Combining \eqref{c111a} and \eqref{c111b} yields
$$\sum^n_{s=1}\widetilde{G}^{lm}u_{s}u_{slm}=|D u|^{2}\sum_{i=2}^{n}\widetilde{G}^{ii}+e^{(-b-q-k)u}(1+|D u|^{2})^{\frac{k-q}{2}}\left[(-b-q-k)f|D u|^{2}+|D u|f_{1}\right].$$
A straightforward computation then gives
\begin{align}
 0\geq \frac{1}{2}\widetilde{G}^{ij}Q_{ij}=&\sum^n_{m=1}\widetilde{G}^{ij}u_{mi}u_{mj}+\sum^n_{m=1}\widetilde{G}^{ij}u_{m}u_{mij} \notag\\
 \geq&\sum_{i=2}^{n}\widetilde{G}^{ii}u_{ii}^{2}+|D u|^{2}\sum_{i=2}^{n}\widetilde{G}^{ii}+e^{(-b-q-k)u}(1+|D u|^{2})^{\frac{k-q}{2}}f_{1}|D u|\notag\\
 &+(-b-q-k)e^{(-b-q-k)u}(1+|D u|^{2})^{\frac{k-q}{2}}f|D u|^{2}\notag\\
 \geq &\sum_{i=2}^{n}{F}^{ii}u_{ii}^{2}+|D u|^{2}\sum_{i=2}^{n}{F}^{ii}- F\frac{|D f|}{f}|D u|+(-b-q-k)F|D u|^{2}\notag\\
=&\sum_{i=1}^n F^{ii} a_{ii}^2 -2kF+ (1+|D u|^{2}) \sum_{i=2}^n F^{ii}+F^{11}- F \frac{|D f|}{f}|D u|\notag\\
&+(-b-q-k)F|D u|^{2}\notag\\
\geq&\sum_{i=1}^n F^{ii} a_{ii}^2 -2kF + (1+|D u|^{2}) \sum_{i=2}^n F^{ii}-\frac{|D f|^2F}{4(-b-q-k)f^2}
\label{equiiur-01},
\end{align}
where we used $-b-q-k>0$ and $\widetilde{G}^{ii}={F}^{ii}$ for $2\leq i\leq n$.
Let $\xi=\left(a_{11}(x_0),\dots,a_{nn}(x_0)\right)$ and $\xi_i=a_{ii}(x_0)$. Then $\xi\in\Gamma_k$.
Noting that $  \xi_1=1  $, by (iii) and (v) of Proposition \ref{sigma} together with Proposition \ref{NM}, we obtain
\begin{equation}\label{equiiur-011-12}
\begin{aligned}
\sum_{i=2}^n F^{ii}=\sum_{i=2}^{n}\sigma_{k-1}(\xi |i)=(n-k)\sigma_{k-1}(\xi)+\sigma_{k-2}(\xi|1)\geq \frac{k(n-k)}{n-k+1}(C_n^k)^{\frac{1}{k}} F^{1-\frac{1}{k}}.
\end{aligned}
\end{equation}
By (vii) of Proposition \ref{sigma} and Proposition \ref{NM},
\begin{equation}\label{equiiur-011-13}
  \sum_{i=1}^n F^{ii} \xi_i^2= \sigma_1(\xi)\sigma_k(\xi)-(k+1)\sigma_{k+1}(\xi)
  \geq n(C_n^k)^{-\frac{1}{k}}F^{1+\frac{1}{k}}-(k+1)\sigma_{k+1}(\xi).
\end{equation}
It follows from (3.33) in \cite{Guan-Lin-Ma-2006} and the assumption $k<n$ that
\begin{equation}
\label{eq:claim}
(k+1)\sigma_{k+1}(\xi) \leq (k+1)F + (n-k-1)(C_{n-1}^k)^{-\frac{1}{k}} F^{1+\frac{1}{k}}.
\end{equation}
 Combining \eqref{equiiur-011-13} and \eqref{eq:claim} yields
\begin{equation}\label{equiiur-011-13333}
\begin{aligned}[t]
 \sum_{i} F^{ii} \xi_i^2
\geq
c_{0}
(C_{n}^k)^{-\frac{1}{k}} F^{1+\frac{1}{k}}-(k+1)F,
\end{aligned}
\end{equation}
where $c_{0}=n-(n-k-1)n^{\frac{1}{k}}(n-k)^{-\frac{1}{k}}>0$.
From \eqref{equiiur-01}, \eqref{equiiur-011-12}, and \eqref{equiiur-011-13333},
it follows that
\begin{align}
 0\geq&~c_0(C_{n}^k)^{-\frac{1}{k}} F^{\frac{1}{k}}
 +\frac{k(n-k)}{n-k+1}(C_n^k)^{\frac{1}{k}} F^{-\frac{1}{k}}(1+|D u|^{2})\label{c1222221}
 -\frac{|D f|^2}{4(-b-q-k)f^2}-(3k+1)\notag\\
 \geq&~2c_0^{\frac{1}{2}}k^{\frac{1}{2}}
 (n-k)^{\frac{1}{2}}(n-k+1)^{-\frac{1}{2}}(1+|D u|^{2})^{\frac{1}{2}}
 -\frac{|D f|^2}{4(-b-q-k)f^2}-(3k+1)\notag,
\end{align}
which implies that $|D u|\leq C$,
where $C$ depends on $n$, $b$, $q$, $k$, and $\max_{\mathbb{S}^{n}} \frac{|D f|}{f}$.

Set $\widetilde{\rho}=\frac{\rho}{\min_{\mathbb{S}^{n}} \rho}$. Since $|D \log \widetilde{\rho}|=|D\log \rho| \leq C$ and $\min_{\mathbb{S}^{n}} \log \widetilde{\rho}=0$, we obtain
\begin{equation*}
  1\leq\max_{\mathbb{S}^{n}} \widetilde{\rho}=\frac{\max_{\mathbb{S}^{n}}\rho}{\min_{\mathbb{S}^{n}} \rho} \leq e^C,
\end{equation*}
which completes the proof.
\end{proof}

Applying Theorem 3.5 in \cite{GongTu20262}, we derive the following curvature estimates:
\begin{theorem}\label{c22}
Let $1\leq k <n $ and  $q\leq 1$. Suppose $f \in C^2(\mathbb{S}^n)$ is a positive function
and $M$ is a $k$-convex star-shaped hypersurface satisfying equation \eqref{G-eqk}. Then
\begin{equation}\label{eq:3.1}
\sigma_1(\kappa) \leq C.
\end{equation}
 where the constant $C$ depends on $n$, $k$, $b$, $q$, $|\rho|_{C^1}$, $|f|_{C^2}$, $\min_{\mathbb{S}^n}f$, and $\min_{\mathbb{S}^n}\rho$.
\end{theorem}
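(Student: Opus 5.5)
The curvature bound \eqref{eq:3.1} is obtained by putting \eqref{G-eqk} into the general framework $\sigma_k(\kappa)=\psi(X,\nu)$ and checking the hypotheses of Theorem 3.5 in \cite{GongTu20262}. Since $M$ is star-shaped, the support function $u:=\langle X,\nu\rangle=\rho^2(\rho^2+|D\rho|^2)^{-1/2}$ has a positive lower bound depending only on $|\rho|_{C^1}$ and $\min_{\mathbb S^n}\rho$. Likewise $|X|=\rho$ is pinched between $\min\rho$ and $\max\rho$. Hence on $M$ the right-hand side
\[
\psi(X,\nu)=f\!\left(\tfrac{X}{|X|}\right)|X|^b\langle X,\nu\rangle^q
\]
is a positive function. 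Its $C^2$ norm in $(X,\nu)$ on the compact region that $M$ visits is controlled by $|f|_{C^2}$, $\min f$, $|\rho|_{C^1}$, $\min\rho$, $b$ and $q$.

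The structural condition such theorems need concerns the $\nu$-dependence; in the spirit of Guan--Ren--Wang \cite{Guan-Ren15} and Huang--Xu \cite{HX-2013}, the admissible range is $q\le1$. So I plan to verify this directly by a maximum-principle argument on the test function
\[
\Phi=\log\sigma_1(\kappa)-N\log u+\tfrac{\beta}{2}|X|^2,
\]
with $N$ and $\beta$ to be chosen, evaluated at its maximum point. This reproduces the proof of the cited theorem in our setting.

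\textbf{Step 1: derivatives of the right-hand side.} Differentiate the equation twice and use \eqref{xi}--\eqref{uij}. Writing $\log\psi=\log f+b\log|X|+q\log u$, the $\nu$-dependence enters only through $q\log u$. By \eqref{uij}, its second derivative contributes a term $-q\,h_{ik}h_{kj}$, plus terms linear in $\nabla h$ and $h$. These combine as follows:
\begin{itemize}
\item the contribution $-q\sum_i\kappa_i^2$ has a favorable sign after dividing by $\sigma_1$, or is absorbed, precisely because $q\le1$;
\item the term $-N\sigma^{ii}u_{ii}/u$ produces $N\sigma^{ii}\kappa_i^2$ together with $-Nk\psi/u$;
\item the term $\beta|X|^2/2$ produces $\beta\sum\sigma^{ii}-k\beta\psi u$.
\end{itemize}

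\textbf{Step 2: third-order terms (the main obstacle).} The term $\sigma^{ii}\sigma_1(h)_{ii}/\sigma_1$ produces $-\sigma^{pq,rs}h_{pqi}h_{rsi}$ together with $\sigma^{ii}h_{11i}^2/\sigma_1^2$. Handling these uses the concavity of $\sigma_k^{1/k}$ on $\Gamma_k$ and the critical-point identity $\nabla\Phi=0$. The latter expresses $h_{11i}/\sigma_1$ through $N h_{ii}\langle X,E_i\rangle/u$ and $\beta\langle X,E_i\rangle$.

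I then split into the two standard cases: either $\kappa_n\le-\delta\kappa_1$, or the index set where $\kappa_i$ is comparable to $\kappa_1$ is handled by the Guan--Ren--Wang type estimate.

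Choosing $N$ large and then $\beta$ large yields $\kappa_1\le C$ at the maximum point. Since $\sigma_1\le n\kappa_1$, this bounds $\sigma_1$ there. As $\Phi$ attains its maximum at that point, and $u$ and $|X|$ are two-sided bounded on all of $M$, the bound $\sigma_1\le C$ then holds on all of $M$.
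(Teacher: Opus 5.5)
Your first paragraph is the paper's proof. The paper proves this statement only by invoking Theorem 3.5 of \cite{GongTu20262}. The only input from the present setting is what you record: $u=\langle X,\nu\rangle=\rho^2(\rho^2+|D\rho|^2)^{-1/2}$ and $|X|=\rho$ are bounded above and below in terms of $|\rho|_{C^1}$ and $\min\rho$. So the right-hand side is positive with controlled $C^2$ norm, which gives the stated dependence of $C$.

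The problem is the self-contained maximum-principle argument you offer as verifying the hypothesis and ``reproducing'' the cited theorem. It has a genuine gap exactly at Step 2.

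\textbf{What $\nabla\Phi=0$ controls.} With $\Phi=\log\sigma_1-N\log u+\frac{\beta}{2}|X|^2$, the critical-point condition gives
\[
\frac{\sum_j h_{jji}}{\sigma_1}=N\frac{\kappa_i\langle X,E_i\rangle}{u}-\beta\langle X,E_i\rangle .
\]
This controls $\nabla\sigma_1$, not $h_{11i}$. Accordingly, the negative gradient term in $\sigma^{ii}\Phi_{ii}$ is $-\sigma^{ii}(\sum_j h_{jji})^2/\sigma_1^2$, not $-\sigma^{ii}h_{11i}^2/\sigma_1^2$.

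\textbf{The bookkeeping fails.} Substituting the identity, the gradient term equals $-N^2\sigma^{ii}\kappa_i^2\langle X,E_i\rangle^2/u^2-\beta^2\sigma^{ii}\langle X,E_i\rangle^2$, plus a cross term of indefinite sign. The favorable terms produced by $-N\log u$ and $\frac{\beta}{2}|X|^2$ are only linear in the parameters. They are $N\sigma^{ii}\kappa_i^2+N\sigma^{ii}\kappa_i^2\langle X,E_i\rangle^2/u^2$ (with $\sigma^{ii}\kappa_i^2$ already spent on the commutator term) and $\beta\sum_i\sigma^{ii}$. Since $\langle X,E_i\rangle^2/u^2$ need not be small, ``choose $N$ large, then $\beta$ large'' makes the inequality worse, not better.

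\textbf{What would be needed.} The estimate can close only if the third-order term $-\sigma_1^{-1}\sum_j\sigma^{pq,rs}h_{pqj}h_{rsj}$ absorbs $\sigma^{ii}(\sigma_1)_i^2/\sigma_1^2$. None of the tools you name does this:
\begin{itemize}
\item Concavity of $\sigma_k^{1/k}$ gives only the lower bound $-(1-\frac1k)|\nabla\psi|^2/(\psi\sigma_1)$, which contains no positive multiple of $|\nabla\sigma_1|^2$.
\item The case split on $\kappa_n\le-\delta\kappa_1$ does not by itself produce such a term.
\item The Guan--Ren--Wang estimates for general $\psi(X,\nu)$ do not cover $k$-convex hypersurfaces for general $2<k<n$, which is precisely the regime here.
\end{itemize}

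\textbf{Where $q\le1$ enters.} Your explanation is also off. The contribution $-q\psi|A|^2$ combines with $k\sigma_k|A|^2$ from commuting derivatives to give $(k-q)\psi|A|^2/\sigma_1$. That sign condition needs only $q\le k$, so it is not where $q\le 1$ is used.

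What is missing is a third-order inequality exploiting the specific form $\psi=f(X/|X|)|X|^b\langle X,\nu\rangle^q$; this is the real content of the cited theorem. Your proof is complete only through the citation, exactly as the paper's is.
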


\subsection{A constant rank theorem}

When applying the continuity method, Theorem \ref{crt} ensures that the convexity of solutions is preserved. The proof follows the method used in the proofs of Theorems 1.1 and 1.2 in \cite{CaffGpMa}. Before giving the proof, some notation is introduced.

Let $W=(W_{ij})$ with $W_{ij}=h_{ij}$, and define $\tilde{F}(W)=-\sigma_k^{-\frac{1}{k}}(W)$.
Denote
\begin{equation*}
  \tilde{F}^{ij}=\frac{\partial\tilde{F}}{\partial W_{ij}},\quad \tilde{F}^{ij,rs}=\frac{\partial^2 \tilde{F}}{\partial W_{ij}\partial W_{rs}},\quad \psi(X)=f^{-\frac{1}{k}}\left(\frac{X}{|X|}\right)|X|^{-\frac{b}{k}}.
\end{equation*}
For $X\in M$, set $\tilde{\psi}(X)=-\psi(X)\langle X, \nu(X) \rangle^{-\frac{q}{k}}$.
Then equation \eqref{G-eqk} is equivalent to
\begin{equation}\label{full-rank-equation}
 \tilde{F}(W)=\tilde{\psi}(X),\quad X \in M.
\end{equation}

We now prove the following constant rank theorem.

\begin{theorem}\label{crt}
Let $1 \leq k < n $.
Suppose that $M$ is a star-shaped $C^3$-hypersurface satisfying equation \eqref{G-eqk}, that the second fundamental form of $M$ is positive semi-definite, and that $f\in C^2(\mathbb{S}^n)$ is a positive function satisfying \eqref{908998}. Then $M$ is strictly convex.
\end{theorem}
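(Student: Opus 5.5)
We follow the Caffarelli--Guan--Ma deformation approach \cite{CaffGpMa}. Suppose $W=(h_{ij})\ge 0$ is not positive definite somewhere. Let $l$ be the minimal rank of $W$ on $M$, attained at some $X_0$. Since $M$ is closed and compact in $\mathbb{R}^{n+1}$, there is a point where all principal curvatures are positive, so $l<n$. Since $\kappa\in\Gamma_k$ and $\sigma_k(\kappa)>0$, at least $k$ curvatures are positive, so $l\ge k$. The goal is to show that the set $\{X\in M:\operatorname{rank}W(X)=l\}$ is open. It is also closed by minimality of $l$ and continuity, so by connectedness the rank is $l<n$ everywhere, which contradicts the existence of an elliptic point.

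Near $X_0$ we use the test function $\phi=\sigma_{l+1}(W)+\frac{\sigma_{l+2}(W)}{\sigma_{l+1}(W)}$, regularized as in \cite{CaffGpMa}. This function is $C^{1,1}$, nonnegative, and vanishes exactly where the rank equals $l$. At a point near $X_0$ we choose an orthonormal frame with $W$ diagonal, and split indices into a good set $G=\{1,\dots,l\}$, where $\kappa_i\ge c>0$, and a bad set $B=\{l+1,\dots,n\}$, where $\kappa_i$ is small. The key estimate is the differential inequality
\[
\tilde F^{ij}\phi_{ij}\le C\left(\phi+|\nabla\phi|\right)
\]
in a neighborhood of $X_0$. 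The strong maximum principle then gives $\phi\equiv0$ near $X_0$, which is the openness we need.

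To derive this inequality, we differentiate \eqref{full-rank-equation} twice: $\tilde F^{ij}h_{ij\alpha\alpha}+\tilde F^{ij,rs}h_{ij\alpha}h_{rs\alpha}=\tilde\psi_{\alpha\alpha}$ for $\alpha\in B$. Using the commutation formula \eqref{h-ij-ji}, we convert $\tilde F^{ij}h_{\alpha\alpha ij}$ into $\tilde F^{ij}h_{ij\alpha\alpha}$ plus curvature terms. Those terms are $O(\phi)$ because $h_{\alpha\alpha}=O(\phi)$ for $\alpha\in B$. As in \cite{CaffGpMa}, $\tilde F^{ij}\phi_{ij}$ then reduces, modulo $O(\phi+|\nabla\phi|)$, to
\[
\sum_{\alpha\in B}\Bigl[\tilde\psi_{\alpha\alpha}-\tilde F^{ij,rs}h_{ij\alpha}h_{rs\alpha}-2\sum_{i\in G}\tilde F^{ii}\kappa_i^{-1}h_{i\alpha\alpha}^2\Bigr]-(\text{quadratic terms in }h_{\alpha\beta i}\text{ with }\alpha,\beta\in B).
\]
The concavity of $\tilde F=-\sigma_k^{-1/k}$, strengthened by the inverse concavity used in \cite{CaffGpMa}, makes the gradient terms have the right sign. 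It remains to show $\sum_{\alpha\in B}\tilde\psi_{\alpha\alpha}\le C(\phi+|\nabla\phi|)$.

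This is the step we expect to be the main obstacle, because $\tilde\psi=-\psi\langle X,\nu\rangle^{-q/k}$ depends on $\nu$. We compute $\tilde\psi_{\alpha\alpha}$ using \eqref{ui}--\eqref{uij}. Since $h_{\alpha i}=O(\phi)$ for $\alpha\in B$, we get $\langle X,\nu\rangle_\alpha=O(\phi)$. Also $\langle X,\nu\rangle_{\alpha\alpha}=h_{\alpha\alpha k}\langle X,E_k\rangle+O(\phi)$, and the term $h_{\alpha\alpha k}$ is $O(\phi+|\nabla\phi|)$ by the standard identity $\nabla\phi\approx\sum_{\alpha\in B}\nabla h_{\alpha\alpha}$. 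Hence
\[
\tilde\psi_{\alpha\alpha}=-\langle X,\nu\rangle^{-q/k}\psi_{\alpha\alpha}+O(\phi+|\nabla\phi|),
\]
where the positive constant $\langle X,\nu\rangle$ comes from star-shapedness. Because $\psi$ is a function on $\mathbb{R}^{n+1}\setminus\{0\}$, we have $\psi_{\alpha\alpha}=D^2\psi(E_\alpha,E_\alpha)-h_{\alpha\alpha}\langle D\psi,\nu\rangle=D^2\psi(E_\alpha,E_\alpha)+O(\phi)$. Assumption \eqref{908998} gives $D^2\psi(E_\alpha,E_\alpha)\ge0$, so $\tilde\psi_{\alpha\alpha}\le C(\phi+|\nabla\phi|)$. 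Combining this with the previous step yields the key inequality, and the strong maximum principle completes the argument.
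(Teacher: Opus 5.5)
Your overall route matches the paper's: a Caffarelli--Guan--Ma deformation argument using the inverse convexity of $-\sigma_k^{-1/k}$, reduced to showing $\sum_{\alpha\in B}\tilde\psi_{\alpha\alpha}\lesssim 0$. You handle that reduction as the paper does, via convexity of $\psi$, $\langle X,\nu\rangle_\alpha=h_{\alpha\alpha}\langle X,E_\alpha\rangle$, and \eqref{uij}.

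The gap is in the step you flagged as the main obstacle. You chose $\phi=\sigma_{l+1}(W)+\sigma_{l+2}(W)/\sigma_{l+1}(W)$, which is the Bian--Guan function, not the one in \cite{CaffGpMa}. You then used $\nabla\phi\approx\sigma_l(G)\sum_{\alpha\in B}\nabla h_{\alpha\alpha}$. That identity holds for the paper's $\Phi=\sigma_{l+1}(W)+A\sigma_{l+2}(W)$: since $\sigma_l(W|i)\sim0$ for $i\in G$ and $\sigma_{l+1}(W|i)\sim0$ for all $i$, one gets $\Phi_j=\sigma_l(G)\sum_{\alpha\in B}h_{\alpha\alpha j}+O(\Phi)$. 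It fails for the quotient. For $\alpha\in B$, to leading order $\partial(\sigma_{l+2}/\sigma_{l+1})/\partial h_{\alpha\alpha}\approx(\sigma_1(B|\alpha)^2-\sigma_2(B|\alpha))/\sigma_1(B)^2$, which is of order one and depends on $\alpha$. So $\phi_j=\sum_{\alpha\in B}(\sigma_l(G)+c_\alpha)h_{\alpha\alpha j}+O(\phi)$ with $c_\alpha\in[0,1]$ not small, and small $|\nabla\phi|$ does not force $\sum_{\alpha\in B}h_{\alpha\alpha j}$ to be small. For example, take one good eigenvalue $1$ and bad eigenvalues $a=4b$ with $b$ small. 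Then $\nabla\phi=0$ gives $\nabla(a+b)\approx-\tfrac{15}{26}\nabla b$, and $|\nabla b|$ can be of order $\sqrt b\gg\phi$. Consequently the $\nu$-dependent contribution $\tfrac{q}{k}\psi\langle X,\nu\rangle^{-\frac qk-1}\sum_j\langle X,E_j\rangle\sum_{\alpha\in B}h_{\alpha\alpha j}$ to $\sum_{\alpha\in B}\tilde\psi_{\alpha\alpha}$ is not shown to be $\lesssim 0$. It is linear in third derivatives, has an $O(1)$ coefficient and no sign, so it cannot simply be dropped.

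There are two ways to repair this; your sketch contains neither.
\begin{itemize}
\item The simplest is to adopt the paper's $\Phi=\sigma_{l+1}+A\sigma_{l+2}$ with $A$ large. Then $\sum_{\alpha\in B}h_{\alpha\alpha j}\sim0$ and your computation goes through verbatim. The large $A$ also provides the negative term $-A\sigma_l(G)\sum_\alpha\sum_{i,j\in B}\tilde F^{\alpha\alpha}W_{ij\alpha}^2$ that the paper uses for the bad-block terms.
\item Alternatively, keep the quotient and add the Bian--Guan absorption step. Set $S_j=\sum_{\beta\in B}h_{\beta\beta j}$ and $D_{\alpha j}=\sigma_1(B)h_{\alpha\alpha j}-h_{\alpha\alpha}S_j$. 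The formula for $\phi_j$ then gives $|S_j|\le C(\phi+|\nabla\phi|)+C\sigma_1(B)^{-1}\sum_{\alpha\in B}|D_{\alpha j}|$. The last sum is absorbed, at cost $O(\sigma_1(B))=O(\phi)$, by the negative term $-\sum_j\tilde F^{jj}\sigma_1(B)^{-3}\sum_{\alpha\in B}D_{\alpha j}^2$. That term comes from the Hessian of $\sigma_{l+2}/\sigma_{l+1}\approx\sigma_2(B)/\sigma_1(B)$.
\end{itemize}

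Two smaller points:
\begin{itemize}
\item Even with the correct identity, only the sum $\sum_{\alpha\in B}h_{\alpha\alpha j}$ is controlled; individually one only has $|h_{\alpha\alpha j}|\le C\sqrt{h_{\alpha\alpha}}$. Your per-$\alpha$ claims about $\tilde\psi_{\alpha\alpha}$ should therefore be stated after summing over $B$, as in the paper. This suffices because $\langle X,E_j\rangle$ does not depend on $\alpha$.
\item $l<n$ is your contradiction hypothesis, not a consequence of the existence of an elliptic point. The elliptic point is what gives the final contradiction, e.g.\ at the maximum of $\rho$, where $W\ge\rho^{-1}I$ by \eqref{h_ij}.
\end{itemize}
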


\begin{proof}

Let $O\subset M$ be an open neighborhood of some point $p_0\in M$ where the minimum rank $l$ of $W$ is attained. For any $X_0\in O$, let $\kappa_n\geq\dots\geq\kappa_1$ be the eigenvalues of $W$ at $X_0$. There is a positive constant $C$ depending only on $n$, $|\rho|_{C^3}$, and $|\tilde{\psi}|_{C^2}$ such that $\kappa_n\geq\dots\geq\kappa_{n-l+1}\geq C$. Let $G=\{ n-l+1,\dots,n  \}$ and $B=\{1,\dots, n-l \}$ be the ``good" and ``bad" sets of indices, respectively. Let $\Lambda_G=(\kappa_{n-l+1},\dots,\kappa_n )$ be the good eigenvalues of $W$ at $X_0$, for simplicity of notation, we also write $G=\Lambda_G$ if there is no confusion.

Since \(O\) is sufficiently small and \(\widetilde F\) is elliptic, a
constant \(A>0\) can be chosen so large that
\begin{equation*}
   \min_\alpha \widetilde{F}^{\alpha\alpha}
   \geq
   \frac{100}{A}
   \sum_{\alpha,\beta,s,\xi}
   \left|
   \widetilde{F}^{\alpha\beta,s\xi}(W(X))
   \right|,
   \quad X\in O.
\end{equation*}
Consider the test function
\begin{equation*}
   \Phi(X)=\sigma_{l+1}(W)+A\sigma_{l+2}(W).
\end{equation*}
Following the notations in \cite{CaffGpMa}, we say that $h(y)\lesssim k(y)$ provided there exists positive constants $C_1$ and $C_2$ such that
\[
(h-k)(y)
\leq
\left(C_1|\nabla\Phi|+C_2\Phi\right)(y).
\]
Also  write $h(y)\sim k(y)$ if $h(y)\lesssim k(y)$ and $k(y)\lesssim h(y)$.
In the following, all computations are performed at $X_0$.

We want to show that
\begin{equation}\label{Ineq-full-rank-I}
    \frac{1}{\sigma_l(G)}\sum_{\alpha=1}^{n}
    \tilde{F}^{\alpha\alpha}\Phi_{\alpha\alpha}\lesssim 0.
\end{equation}
To prove \eqref{Ineq-full-rank-I}, we may assume $\rho\in C^4$ by approximation. For each $X_0\in O$ fixed,  choose a local orthonormal frame $\{E_1,\dots,E_n\}$ in a neighborhood of $X_0$ in $M$ such that $W$ is diagonal at $X_0$ and $\nabla_{E_i}E_j(X_0)=0$.
Similar to (2.19) in \cite{CaffGpMa}, one has
\begin{equation}\label{rank-F-Phi}
    \begin{split}
        \sum_{\alpha=1}^{n}\tilde{F}^{\alpha\alpha}\Phi_{\alpha\alpha}&\sim
        A\sum_{i,\alpha=1}^{n}\tilde{F}^{\alpha\alpha}
        \sigma_{l+1}(W|i)W_{ii\alpha\alpha}+
        \sigma_l(G)\sum_{\alpha=1}^{n}\sum_{i\in B}\tilde{F}^{\alpha\alpha}W_{ii\alpha\alpha}\\
        &-\sigma_{l-1}(G)\sum_{\alpha=1}^{n}\sum_{i,j\in B}\tilde{F}^{\alpha\alpha}W_{ij\alpha}^2
        -2\sum_{\alpha=1}^{n}\sum_{i\in B,j\in G}\sigma_{l-1}(G|j)\tilde{F}^{\alpha\alpha}W_{ij\alpha}^2 \\
        &-A\sigma_l(G)\sum_{\alpha=1}^{n}\sum_{i,j\in B}\tilde{F}^{\alpha\alpha}W_{ij\alpha}^2.
    \end{split}
\end{equation}
By \eqref{h-ij-ji},
\begin{equation}\label{W-commutation-formula-rank}
    \begin{split}
        W_{ii\alpha\alpha}&=h_{ii\alpha\alpha}
        =h_{\alpha\alpha ii}+h_{\alpha\alpha}h_{ii}^2-h_{ii}h_{\alpha\alpha}^2
        =W_{\alpha\alpha ii}+W_{\alpha\alpha}W_{ii}^2-W_{ii}W_{\alpha\alpha}^2.
    \end{split}
\end{equation}
Differentiating \eqref{full-rank-equation} twice gives
\begin{equation}\label{rank-Second-order-derivative}
  \tilde{F}^{\alpha\alpha}W_{\alpha\alpha ii}=\tilde{\psi}_{ii}-
  \tilde{F}^{\alpha\beta,rs}W_{\alpha\beta i}W_{rsi}.
\end{equation}
Note that $W_{ii}\sim 0$ for $i\in B$ and that $\sigma_{l+1}(W|i)\sim 0$ for $1\leq i\leq n$. Therefore, by \eqref{rank-F-Phi}-\eqref{rank-Second-order-derivative},
\begin{equation}\label{rank-F-I}
  \begin{split}
    &\quad \sum_{\alpha=1}^{n}\tilde{F}^{\alpha\alpha}\Phi_{\alpha\alpha} \\
    &
    \sim \sigma_l(G)\left [ \sum_{i\in B}\tilde{\psi}_{ii}-\sum_{i\in B}\sum_{\alpha,\beta,r,s=1}^{n}\tilde{F}^{\alpha\beta, rs}W_{\alpha\beta i}W_{rsi}-A\sum_{\alpha=1}^{n}\sum_{i,j\in B}\tilde{F}^{\alpha\alpha}W_{ij\alpha}^2 \right] \\
    &\quad -\sigma_{l-1}(G)\sum_{\alpha=1}^{n}\sum_{i,j\in B}\tilde{F}^{\alpha\alpha}W_{ij\alpha}^2
    -2\sum_{\alpha=1}^{n}\sum_{i\in B,j\in G}\sigma_{l-1}(G|j)\tilde{F}^{\alpha\alpha}W_{ij\alpha}^2.
  \end{split}
\end{equation}
Following the same proof as that of (2.21)-(2.27) in \cite{CaffGpMa}, and the inverse convexity  of
\(-\sigma_k^{-1/k}\), we obtain
\begin{equation}\label{rank-F-II}
  \frac{1}{\sigma_l(G)}\sum_{\alpha=1}^n
    \tilde{F}^{\alpha\alpha}\Phi_{\alpha\alpha}
    \lesssim \sum_{i\in B}\tilde{\psi}_{ii}.
\end{equation}

It remains to estimate the right-hand side. Since \(h_{ii}\sim0\) for
\(i\in B\), \eqref{ui} gives
 $\langle X, \nu \rangle_i=h_{ii}\langle X,E_i\rangle \sim 0$ for $i\in B$, and hence
\begin{equation}\label{X-nu-q-k-i}
  \left(\langle X,\nu\rangle^{-\frac{q}{k}}\right)_i\sim 0\quad\text{for}\quad i\in B.
\end{equation}
By \eqref{uij} and $\sum_{i\in B}W_{ii\alpha}\sim 0$ for $1\leq\alpha\leq n$,
\begin{equation*}
  \sum_{i\in B} \langle X,\nu\rangle_{ii}=
  \sum_{i\in B}( h_{ii}+\sum_{j=1}^{n}h_{iij}\langle X, E_j\rangle-h_{ii}^2\langle X,\nu\rangle )\sim 0.
\end{equation*}
Therefore,
 \begin{equation}\label{X-nu-nabla}
   \begin{split}
   \sum_{i\in B}\bar{\nabla}^2\left(\langle X,\nu\rangle^{-\frac{q}{k}}\right)(E_i,E_i)
   =\sum_{i\in B} \left[\left(\langle X,\nu\rangle^{-\frac{q}{k}}\right)_{ii}+h_{ii}\partial_{\nu}\left(\langle X,\nu\rangle^{-\frac{q}{k}}\right)\right] \sim 0,
   \end{split}
 \end{equation}
where $ \bar{\nabla}^2 $ denotes the Hessian operator on $ \mathbb{R}^{n+1} $ and $ \partial_{\nu} $ denotes the directional derivative along $ \nu $.
Using \(h_{ii}\sim0\) for \(i\in B\), \eqref{908998},
\eqref{X-nu-q-k-i}, and \eqref{X-nu-nabla}, one obtains
\begin{equation}\label{psi-ii-B-leq-0}
  \begin{split}
  \sum_{i\in B}\tilde{\psi}_{ii} = &
  \sum_{i\in B} \bar{\nabla}^2 \tilde{\psi}(E_i,E_i) -\sum_{i\in B} h_{ii}\partial_{\nu}\tilde{\psi}\\
  = & -\langle X,\nu\rangle^{-\frac{q}{k}}\sum_{i\in B}\bar{\nabla}^2\psi(E_i,E_i) -\psi \sum_{i\in B}\bar{\nabla}^2\left(\langle X,\nu\rangle^{-\frac{q}{k}}\right)(E_i,E_i) \\
  &-2\sum_{i\in B}\psi_i \left(\langle X,\nu\rangle^{-\frac{q}{k}}\right)_i
  -\sum_{i\in B} h_{ii}\partial_{\nu}\tilde{\psi}\\
  \sim & -\!\langle X,\nu\rangle^{-\frac{q}{k}}\sum_{i\in B}\bar{\nabla}^2\psi(E_i,E_i) \lesssim 0,
  \end{split}
\end{equation}
where we used $\langle X,\nu\rangle^{-\frac{q}{k}}>0$.
The combination of \eqref{rank-F-II} and \eqref{psi-ii-B-leq-0} leads to \eqref{Ineq-full-rank-I}. The strong minimum principle implies that $W$ is of constant rank. Finally, assume $\rho(x)$ attains its maximum $\tilde{\rho}$ at $\tilde{x}\in \mathbb{S}^n$. Let $\tilde{X}=\tilde{\rho}\tilde{x}\in M$. It follows from \eqref{h_ij} that $W(\tilde{X})\geq \tilde{\rho}^{-1} I$, which
 implies that $W$ is of full rank at $\tilde{X}$. Hence $W$ is of full rank.
\end{proof}

\subsection{Existence, uniqueness, and strict convexity}

Let $\mathcal{F}(x,\rho,D\rho,D^2\rho)=\sigma^{\frac{1}{k}}(W)$ and $\mathcal{K}(x,\rho,D\rho)=f^{\frac{1}{k}}(x) \rho^{\frac{b+2q}{k}} (\rho^2 + |D \rho|^2)^{-\frac{q}{2k}}$, where $x\in\mathbb{S}^n$. Equation \eqref{G-eqk} can be written as
\begin{equation}\label{eq-uniqueness-existence-convexity}
  \mathcal{F}(x,\rho,D\rho,D^2\rho)=\mathcal{K}(x,\rho,D\rho).
\end{equation}

The following two lemmas will be used to prove the existence and uniqueness of solutions. Since their proofs are similar to those of Lemmas 2.4 and 2.5 in \cite{Guan09}, they are omitted.

\begin{lemma}\label{uni}
Let $1\leq k<n$ and $-b-q-k>0$. Suppose $M_i=\{X\in\mathbb{R}^{n+1}:X=\rho_i(x)x, ~ x \in \mathbb{S}^n\}$ for $i=1,2$ are two $k$-convex star-shaped hypersurfaces satisfying equation \eqref{G-eqk}. Then $\rho_1 \equiv \rho_2$ on~ $\mathbb{S}^n$.
\end{lemma}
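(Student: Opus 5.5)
We argue by a scaling-and-maximum-principle comparison, in the spirit of Lemma 2.4 of \cite{Guan09}. Write the equation in the radial form \eqref{eq-bqk-geq-0} or, equivalently, \eqref{c121} in terms of $u=-\log\rho$. The key structural fact is how the equation behaves under dilation. If $\rho$ is replaced by $t\rho$ with $t>0$, then the principal curvatures scale as $\kappa\mapsto t^{-1}\kappa$. Hence the left side $\sigma_k(\kappa)$ scales by $t^{-k}$. The right side $f\rho^{b+2q}(\rho^2+|D\rho|^2)^{-q/2}$ scales by $t^{b+q}$. Since $-b-q-k>0$, these scaling rates differ, and this mismatch is what forces uniqueness.

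Let $t=\max_{\mathbb{S}^n}\rho_1/\rho_2$ and suppose $t>1$; the case $t<1$ is symmetric with the roles of $\rho_1,\rho_2$ swapped. Set $\tilde\rho=t\rho_2$. Then $\tilde\rho\ge\rho_1$ on $\mathbb{S}^n$, with equality at some point $x_0$. At $x_0$ the function $\log\tilde\rho-\log\rho_1$ attains its minimum $0$. Therefore $D\log\tilde\rho=D\log\rho_1$ and $D^2\log\tilde\rho\ge D^2\log\rho_1$ there.

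We now compare the curvatures at $x_0$ using \eqref{h_ij}. At $x_0$ the values of $\rho$ and $D\log\rho$ coincide, and the matrix $-\delta_{ik}+\rho_i\rho_k/(\rho^2v^2)$ is negative definite. It follows that the Weingarten matrix of $\tilde M=tM_2$ is $\le$ that of $M_1$, in the sense of being similar to a symmetric matrix that is smaller. Monotonicity of $\sigma_k$ on $\Gamma_k$ then gives $\sigma_k(\tilde\kappa)(x_0)\le\sigma_k(\kappa^{(1)})(x_0)$. This is cleanest in the $u$-formulation \eqref{c121}, where $a_{ij}$ depends on $D^2u$ monotonically once $Du$ is fixed.

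Next we evaluate both sides through the equation at $x_0$, where $\rho$ and $D\rho/\rho$ agree. For $M_1$ we have $\sigma_k(\kappa^{(1)})=f\rho_1^{b+q}(1+|D\log\rho_1|^2)^{-q/2}$. For $\tilde M$, the scaling computed above gives
\[
\sigma_k(\tilde\kappa)=t^{-k}\sigma_k(\kappa^{(2)})=t^{-k}t^{-b-q}f\tilde\rho^{\,b+q}(1+|D\log\tilde\rho|^2)^{-q/2}.
\]
Since these expressions agree at $x_0$ apart from the factors of $t$, this becomes $\sigma_k(\tilde\kappa)=t^{-b-q-k}\sigma_k(\kappa^{(1)})$ at $x_0$. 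Combined with the inequality $\sigma_k(\tilde\kappa)\le\sigma_k(\kappa^{(1)})$, we obtain $t^{-b-q-k}\le1$. Because $-b-q-k>0$, this forces $t\le1$, contradicting $t>1$. Hence $\max\rho_1/\rho_2\le1$, and by symmetry $\max\rho_2/\rho_1\le1$, so $\rho_1\equiv\rho_2$.

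The main obstacle is justifying the curvature comparison at the touching point. The Weingarten matrix in \eqref{h_ij} is not symmetric, so we must argue via the symmetric matrix $a_{ij}$, or via $B h B$ with the identical $B$ at $x_0$, and use that $\kappa\in\Gamma_k$ for both surfaces so that $\sigma_k$ is increasing along the segment joining them. The convexity of $\Gamma_k$ and the fact that $\sigma_k^{ij}>0$ on $\Gamma_k$ handle this. Note that no smoothness beyond $C^2$ and no $q$-restriction is needed, because the $\langle X,\nu\rangle$ factor is matched exactly at $x_0$.
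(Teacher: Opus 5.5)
Your proof is correct and follows essentially the argument the paper defers to (Lemma 2.4 of \cite{Guan09}, which uses the same mechanism as Step 2 of the proof of Theorem \ref{Th-Two}): compare the two solutions at an extremum of $\rho_1/\rho_2$, where \eqref{h_ij} orders $\rho W[\rho]$, and the homogeneity mismatch $-b-q-k>0$ then forces $\rho_1\le\rho_2$. Your dilation $\tilde\rho=t\rho_2$ just repackages this comparison, and symmetrizing the Weingarten matrix via $BhB$ (or $a_{ij}$) with a common $B$ at the touching point is the right way to justify the monotonicity of $\sigma_k$.
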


\begin{lemma}\label{linearized}
Let $1\leq k<n$ and $-b-q-k>0$. Let $\mathcal{L}$ denote the linearized operator of $\mathcal{F}(x,\rho,D\rho,D^2\rho)-\mathcal{K}(x,\rho,D\rho)$ at a solution $\rho$ of equation \eqref{G-eqk}.
If $w$ satisfies $\mathcal{L}w = 0$ on $\mathbb{S}^n$, then $w \equiv 0$ on $\mathbb{S}^n$.
\end{lemma}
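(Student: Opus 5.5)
The plan is to argue by the maximum principle, applied to the linearized equation in the form of \eqref{c121}. Write $u=-\log\rho$. The map $\rho\mapsto\mathcal F-\mathcal K$ is the same as the map $u\mapsto G(u):=\log\sigma_k(a_{ij})-(-b-q-k)u-\frac{k-q}{2}\log(1+|Du|^2)-\log f$, up to multiplication by a positive function and an additive quantity vanishing at the solution. Linearizing at the solution $\rho$, the identity $\mathcal L w=0$ becomes a homogeneous linear equation for $\varphi=-w/\rho$, which is the variation of $u$:
\[
\mathcal M\varphi:=\frac{1}{\sigma_k(a)}\frac{\partial\sigma_k}{\partial a_{ij}}\,\delta a_{ij}[\varphi]-(k-q)\frac{Du\cdot D\varphi}{1+|Du|^2}-(-b-q-k)\varphi=0 .
\]
Here $\delta a_{ij}[\varphi]$ is the linearization of $a_{ij}=\bar g^{il}\bar h_{lm}\bar g^{mj}$. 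It is linear in $D^2\varphi$ and $D\varphi$, and it contains no zeroth-order term in $\varphi$. This is because $a_{ij}$ depends only on $Du$ and $D^2u$: the factor $e^{ku}$ was already moved out in \eqref{kappa-B-h}, and that is exactly what produced the $-(-b-q-k)u$ term.

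The key structural points are the following.

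\emph{(i) The operator is elliptic.} The second-order part is $\frac{1}{\sigma_k}F^{ij}\bar g^{il}\bar g^{mj}\varphi_{lm}$. It is positive definite because $a\in\Gamma_k$ gives $(F^{ij})>0$ by Proposition \ref{sigma}(ii), and $\bar g$ is invertible.

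\emph{(ii) All first-order terms are harmless.} The terms coming from the dependence of $\bar g$ and $\bar h$ on $Du$, together with the term $(k-q)\frac{Du\cdot D\varphi}{1+|Du|^2}$, involve only $D\varphi$ and have bounded coefficients. They therefore vanish at an interior extremum.

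\emph{(iii) The zeroth-order coefficient has the right sign.} It equals $-(-b-q-k)<0$, by the hypothesis $-b-q-k>0$.

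The conclusion then follows by evaluating at extrema. Let $x_0$ be a maximum point of $\varphi$ on $\mathbb S^n$, which exists by compactness. There $D\varphi=0$ and $D^2\varphi\le 0$, so the second-order part is $\le 0$ and the first-order part vanishes. Hence $0=\mathcal M\varphi(x_0)\le-(-b-q-k)\varphi(x_0)$, which gives $\max\varphi\le 0$. The same argument at a minimum point gives $\min\varphi\ge0$. Therefore $\varphi\equiv0$, and so $w\equiv0$.

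The main obstacle is the bookkeeping of the change of variables. One must check that passing from $\mathcal F-\mathcal K$ to $G$ only multiplies the linearization by a positive factor and adds terms that vanish at a solution, since the equation holds there. One must also confirm that no hidden zeroth-order term arises from $\delta a_{ij}$ or from the Jacobian $w=-\rho\varphi$. I would handle this by writing $\mathcal F=\sigma_k^{1/k}(\kappa)$ via \eqref{kappa-B-h} as $e^{u}(1+|Du|^2)^{-1/2}\sigma_k^{1/k}(a)$. With this form, $\mathcal F-\mathcal K=\mathcal K\,(e^{\Theta/k}-1)$, where $\Theta$ is exactly $G(u)$ (the logarithmic version of \eqref{c121}). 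Differentiating at $\Theta=0$ gives $\mathcal L w=\frac{\mathcal K}{k}\,\mathcal M\varphi$ with $\mathcal K>0$, and the argument above then applies directly.
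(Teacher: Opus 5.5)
Your argument is correct and is essentially the proof the paper intends. The paper omits it, citing Lemma 2.5 of \cite{Guan09}, which likewise passes to $u=-\log\rho$ as in \eqref{c121}, notes that the linearized operator has positive definite principal part, first-order terms vanishing at extrema and zeroth-order coefficient $-(-b-q-k)<0$, and concludes by the maximum principle. Your factorization $\mathcal F-\mathcal K=\mathcal K\,(e^{\Theta/k}-1)$, which gives $\mathcal L w=\frac{\mathcal K}{k}\,\mathcal M(-w/\rho)$ at a solution, cleanly handles the one bookkeeping point.
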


\begin{proof}[\textbf{Proof of Theorem~\ref{Th-One}}]
For $1\leq k\leq n-1$, we consider the following family of equations
\begin{equation}\label{ft}
F_t=\sigma_k(\kappa)=f_t \rho^{b+2q} (\rho^2 + |D \rho|^2)^{-q/2}, \quad \text{on }\quad \mathbb{S}^n,
\end{equation}
where $f_t=(1-t+t f^{-\frac{1}{k}})^{-k}$ with $0 \leq t \leq 1$. Let
$T = \{ t \in [0,1] : \text{equation \eqref{ft} has a }$ $ k\text{-convex solution} \}$.
$T$ is nonempty because $\rho = (C_n^k)^{\frac{1}{b+q+k}}$ is a solution for $t = 0$. By Lemma \ref{linearized}, the linearized operator is invertible at any solution; hence by the implicit function theorem, $T$ is open. It follows from Theorem \ref{c0}, Theorem \ref{c122}, Theorem \ref{c22}, and the Evans-Krylov theorem that $|\rho_t|_{C^{3,\alpha}(\mathbb{S}^n)} \leq C$,
where $C$ depends on $n$, $k$, $b$, $q$, $\alpha$, $\min_{\mathbb{S}^n} f$, $\max_{\mathbb{S}^n} f$, $|f|_{C^1}$, and $|f|_{C^2}$.
Therefore, $T$ is closed.
Thus $T = [0,1]$, and a $k$-convex solution of \eqref{ft} exists for every $ t\in[0,1]$.
The uniqueness part of the theorem follows from Lemma \ref{uni}.

Next, we prove the strict convexity of the solution under \eqref{908998} and the assumption $b\leq -k$. For $t\in [0,1]$, let $\rho_t$ be the unique $k$-convex solution of \eqref{ft} and $M_t=\{X\in\mathbb{R}^{n+1}:X=\rho_t(x)x, ~ x \in \mathbb{S}^n\}$. Denote $\tilde{T}=\{t\in [0,1]: M_t \text{ is strictly convex}\}$. It follows from $0\in\tilde{T}$ that $\tilde{T}$ is nonempty. Let $t^{*}$ be an arbitrary limit point of $\tilde{T}$. Then the second fundamental form of $M_{t^{*}}$ is positive semi-definite. It follows from \eqref{908998} and $b\leq -k$ that $f_{t^{*}}^{-\frac{1}{k}}(\frac{X}{|X|}) |X|^{-\frac{b}{k}}$ is locally convex in $\mathbb{R}^{n+1}  \setminus \{0\}$. By Theorem \ref{crt}, $M_{t^{*}}$ is strictly convex, which implies $t^{*}\in \tilde{T}$. Therefore, $\tilde{T}$ is closed. Note that $\tilde{T}$ is also open; hence $\tilde{T}=[0,1]$. This completes the proof.
\end{proof}

\section{Existence and uniqueness in homogeneous case}\label{Homogeneous-case}

In this section, we consider equation \eqref{G-eqk} for the case $-b-q -k=0$.  We study the following equation
\begin{equation}
\label{eq:5.1}
\sigma_k(\kappa)=f \rho^{b+2q-\varepsilon} (\rho^2 + |D \rho|^2)^{-\frac{q}{2}}, \quad \text{on } \quad \mathbb{S}^n,
\end{equation}
where $\varepsilon > 0$ and $\kappa=(\kappa_1,\dots,\kappa_n)$ are the principal curvatures of $M=\{X\in\mathbb{R}^{n+1}:X=\rho(x)x, ~ x \in \mathbb{S}^n\}$.

\subsection{The a priori estimates}

Following the proof of Theorem \ref{c0}, we derive $C^0$ estimates for equation \eqref{eq:5.1}.

\begin{theorem}
\label{thm:5.1}
Let $1\leq k <n $, $\varepsilon > 0$, and $-b-q-k=0$. Assume that $f \in C^2(\mathbb{S}^n)$ is a positive function
and that $M$ is a $k$-convex star-shaped hypersurface satisfying  equation \eqref{eq:5.1}.
Then
$$
\frac{ \min_{\mathbb{S}^n} f}{C_n^k} \leq \rho^{\varepsilon}(x) \leq \frac{ \max_{ \mathbb{S}^n} f}{C_n^k}, \quad \forall~x \in \mathbb{S}^n.
$$
\end{theorem}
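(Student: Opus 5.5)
The plan is to repeat the maximum-principle argument from the proof of Theorem \ref{c0}, now applied to \eqref{eq:5.1}, and to use $-b-q-k=0$ to make the $\rho$-powers cancel except for $\rho^{-\varepsilon}$.

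First consider a point $x_0\in\mathbb{S}^n$ where $\rho$ attains its maximum. There $D\rho(x_0)=0$, so $v=1$, and $D^2(\log\rho)(x_0)\le 0$. By \eqref{h_ij}, $h^i_j(x_0)=\rho^{-1}\bigl(\delta_{ij}-(\log\rho)_{ij}\bigr)\ge \rho^{-1}\delta_{ij}$. Hence every principal curvature satisfies $\kappa_i(x_0)\ge (\max\rho)^{-1}$, and by monotonicity of $\sigma_k$ on the positive cone we get $\sigma_k(\kappa)(x_0)\ge C_n^k(\max\rho)^{-k}$.

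On the other side, at $x_0$ the right-hand side of \eqref{eq:5.1} equals
\[
f\rho^{b+2q-\varepsilon}\rho^{-q}=f\rho^{b+q-\varepsilon}=f\rho^{-k-\varepsilon},
\]
using $b+q=-k$. Comparing the two gives $C_n^k(\max\rho)^{-k}\le (\max f)(\max\rho)^{-k-\varepsilon}$, that is, $(\max\rho)^{\varepsilon}\le \max f/C_n^k$.

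Symmetrically, at a minimum point $x_1$ of $\rho$ we have $D\rho(x_1)=0$ and $D^2\log\rho(x_1)\ge0$. Then $h^i_j(x_1)\le\rho^{-1}\delta_{ij}$, so $\kappa_i(x_1)\le(\min\rho)^{-1}$. To pass to $\sigma_k(\kappa)\le C_n^k(\min\rho)^{-k}$ we need $\sigma_k$ to be monotone increasing on $\Gamma_k$. This holds by Proposition \ref{sigma}(ii): the vector $(\min\rho)^{-1}(1,\dots,1)-\kappa$ has nonnegative entries, so moving from $\kappa$ toward it keeps us in $\Gamma_k$, where $\sigma_k$ only increases. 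Combining with the equation gives $(\min f)(\min\rho)^{-k-\varepsilon}\le C_n^k(\min\rho)^{-k}$, so $(\min\rho)^{\varepsilon}\ge \min f/C_n^k$.

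Since $\varepsilon>0$, these two bounds together give the stated inequality for $\rho^\varepsilon(x)$ at every $x$. The only slightly delicate point is the monotonicity used at the minimum, which is handled by Proposition \ref{sigma}(ii). Everything else is identical to the proof of Theorem \ref{c0}.
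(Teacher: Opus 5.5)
Your proposal is correct and is the same maximum-principle argument the paper uses: the paper simply refers back to the proof of Theorem \ref{c0}, with the exponent $b+q-\varepsilon=-k-\varepsilon$ at critical points of $\rho$. Your monotonicity argument at the minimum point fills in the step the paper leaves as ``a similar argument''. The segment from $\kappa$ to $(\min\rho)^{-1}(1,\dots,1)$ stays in $\Gamma_k$ because $\Gamma_k$ is convex and both endpoints lie in it, so Proposition \ref{sigma}(ii) gives $\sigma_k(\kappa)\le C_n^k(\min\rho)^{-k}$.
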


Based on $C^0$ estimates, we can obtain the following $C^1$ estimates.
\begin{theorem}\label{c1225}
Let $1\leq k <n $, $\varepsilon > 0$, $q \neq 0$, and $-b-q-k=0$. Suppose that $f \in C^2(\mathbb{S}^n)$ is a positive function
and that $M$ is a $k$-convex star-shaped hypersurface satisfying equation \eqref{eq:5.1}.
Then
\begin{equation}\label{9i898u9uer55}
\max_{\mathbb{S}^{n}}|D\log\rho|\leq C,
\end{equation}
where $C$ is a constant depending on $n$, $k$, $q$, $\min_{\mathbb{S}^{n}}f$, $\max_{\mathbb{S}^{n}}f$, and $\max_{\mathbb{S}^{n}}\frac{|D f|}{f}$. In particular,
\begin{equation}\label{9i898u9uer155}
1\leq\frac{\max_{\mathbb{S}^{n}}\rho}{\min_{\mathbb{S}^{n}}\rho}\leq e^C.
\end{equation}
\end{theorem}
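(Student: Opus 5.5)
The proof will follow the proof of Theorem~\ref{c122}. The difference is that the constant must not depend on $\varepsilon$, so the good term $(-b-q-k)F|Du|^2$ used there has to be replaced.

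\textbf{Setup.} With $u=-\log\rho$, the computation leading to \eqref{c121} turns \eqref{eq:5.1} into
\begin{equation*}
F(a_{ij})=\sigma_k(a_{ij})=f\,e^{\varepsilon u}\left(1+|Du|^2\right)^{\frac{k-q}{2}},\quad\text{on}\quad\mathbb{S}^n,
\end{equation*}
because $-b-q-k=0$. If we repeat \eqref{equiiur-01} verbatim for $Q=|Du|^2$, the coefficient of the good term $F|Du|^2$ becomes $\varepsilon$. That coefficient degenerates as $\varepsilon\to0$ and can no longer absorb $F\frac{|Df|}{f}|Du|$ uniformly. Two facts are still available:
\begin{itemize}
\item Theorem~\ref{thm:5.1} gives $e^{\varepsilon u}=\rho^{-\varepsilon}$ bounded above and below by constants depending only on $\min f$, $\max f$, $n$, $k$.
\item Only derivatives of $u$ will enter, so no bound on $\operatorname{osc} u$ is needed at this stage.
\end{itemize}

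\textbf{Modified test function.} I will use
\begin{equation*}
\Phi=\log|Du|^2+\beta u \qquad\text{or, alternatively,}\qquad \Phi=\log(1+|Du|^2)+\beta u,
\end{equation*}
where $\beta$ is a constant depending only on $n,k,q$ and $\max\frac{|Df|}{f}$; its sign is fixed by the signs of $q$ and $k-q$. At a maximum point $x_0$, choose the frame as in Theorem~\ref{c122} with $u_1=|Du|$.
\begin{enumerate}
\item The first-order condition now reads $u_{11}=-\frac{\beta}{2}u_1^2$ and $u_{1i}=0$ for $i\geq2$. So $a_{11}$ is no longer $1$ but an explicit affine function of $|Du|^2$.
\item Differentiate the equation once along $e_1$. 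This produces a term of the form $(k-q)F\,(1+|Du|^2)^{-1}u_1u_{11}\sim -\frac{\beta(k-q)}{2}F\frac{|Du|^4}{1+|Du|^2}$, plus $\varepsilon F u_1$, plus $F\frac{f_1}{f}u_1$. The term $\varepsilon F u_1$ has a favourable sign (or is bounded via Theorem~\ref{thm:5.1}).
\item The second-order term contributes $\beta\,\widetilde G^{ij}u_{ij}$. Using $\widetilde G^{ij}a_{ij}=kF$ and $a_{ii}=1+u_{ii}$ for $i\geq2$, this equals $\beta\bigl(kF-\sum_{i\geq2}F^{ii}\bigr)$ up to the $i=1$ contribution.
\end{enumerate}
Collecting terms, the goal is an inequality of the shape
\begin{equation*}
0\geq c_1 F|Du|^2+\sum_i F^{ii}a_{ii}^2+(1+|Du|^2)\sum_{i\geq2}F^{ii}-C F|Du|-CF ,
\end{equation*}
with $c_1>0$ independent of $\varepsilon$. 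Here $c_1$ is essentially a positive multiple of $|\beta q|$ (it arises from combining the $(k-q)$ and $k$ terms). From this point the Newton--Maclaurin part \eqref{equiiur-011-12}--\eqref{c1222221} goes through as before and yields $|Du|\leq C$. The ratio bound \eqref{9i898u9uer155} then follows exactly as at the end of Theorem~\ref{c122}.

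\textbf{Main obstacle.} The hard step is the sign bookkeeping in the last collection: showing that after all cancellations the coefficient of $F|Du|^2$ is a nonzero multiple of $q$. This is exactly where $q\neq0$ is needed, consistent with the $q=0$ case requiring the separate treatment of \cite{Guan-Lin-Ma-2006}. Since $a_{11}$ is no longer $1$, \eqref{eq:claim} and \eqref{equiiur-011-13333} must be rechecked with the new $a_{11}$; in particular, $a_{11}$ could be negative inside $\Gamma_k$ for one sign of $\beta$. If that fails, I would switch to the variant $\Phi=\log(1+|Du|^2)+\beta u$ and choose the sign of $\beta$ so that $a_{11}\geq1$. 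I would then use (iv) of Proposition~\ref{sigma} to compare $F^{11}$ with $\sum_{i\geq2}F^{ii}$.
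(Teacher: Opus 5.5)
Your local computation at the maximum point is sound in outline. With $\Phi=\log|Du|^2+\beta u$ you get $u_{11}=-\tfrac{\beta}{2}|Du|^2$. After collecting the $k$ and $(k-q)$ contributions, the coefficient of $F$ is $\tfrac{\beta}{2}|Du|^2\tfrac{k+q|Du|^2}{1+|Du|^2}+\varepsilon|Du|^2\approx\tfrac{\beta q}{2}|Du|^2$, which is good when $\beta q>0$.

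\textbf{The gap is the globalization step.} Your claim that no bound on $\operatorname{osc}u$ is needed is false. A bound on $|Du|(x_0)$ at the maximum of $\Phi$ only yields
\[
\max_{\mathbb S^n}|Du|^2\le |Du|^2(x_0)\,e^{|\beta|\operatorname{osc}u},
\]
and $\operatorname{osc}u=\log(\max\rho/\min\rho)$ is exactly what \eqref{9i898u9uer155} is meant to deduce \emph{from} the gradient bound. The only $\varepsilon$-independent information available beforehand is Theorem~\ref{thm:5.1}, which gives just $\varepsilon\operatorname{osc}u\le\log(\max f/\min f)$. So your constant blows up like $(\max f/\min f)^{|\beta|/\varepsilon}$, which is precisely the $\varepsilon$-dependence you set out to avoid. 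The obvious repairs do not work. Taking $\beta\sim\varepsilon$ makes the good term degenerate again. Using $\operatorname{osc}u\le\pi\max|Du|$ gives only $\max|Du|\le C|\beta|^{-1}e^{\pi|\beta|\max|Du|/2}$, which is no bound at all. The variant $\log(1+|Du|^2)+\beta u$ has the same defect.

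\textbf{The missing idea is that no absorption of the linear term $F|Du|\,|Df|/f$ is needed.} The paper keeps $Q=|Du|^2$ exactly as in Theorem~\ref{c122}, simply drops $\varepsilon F|Du|^2\ge0$, and uses \eqref{equiiur-011-12} and \eqref{equiiur-011-13333} to reach \eqref{c12222215}. The fact you listed but did not exploit is the key: by Theorem~\ref{thm:5.1}, $fe^{\varepsilon u}$ is bounded above and below uniformly in $\varepsilon$, so $F\asymp(1+|Du|^2)^{\frac{k-q}{2}}$. Hence the two Newton--Maclaurin terms satisfy
\[
F^{\frac1k}+F^{-\frac1k}(1+|Du|^2)\ \ge\ c\Big((1+|Du|^2)^{\frac12-\frac{q}{2k}}+(1+|Du|^2)^{\frac12+\frac{q}{2k}}\Big)\ \ge\ c\,(1+|Du|^2)^{\frac12+\frac{|q|}{2k}} .
\]
For $q\neq0$ this grows superlinearly and dominates $|Du|\,|Df|/f$. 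This is where $q\neq0$ enters: for $q=0$ both exponents equal $\tfrac12$. It is also why the constant depends on $\min f$ and $\max f$, and it avoids any test function involving $u$ itself, so $\operatorname{osc}u$ never appears.
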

\begin{proof}
Recall $u=-\log \rho$ and $F(a_{ij})=\sigma_{k}(a_{ij})$. Similar to the derivation in \eqref{c121}, equation \eqref{eq:5.1} is equivalent to
\begin{equation}\label{eq:5.4}
F(a_{ij})=fe^{\varepsilon u}\left(1+|D u|^{2}\right)^{\frac{k-q}{2}},
\quad\text{on}\quad\mathbb{S}^n.
\end{equation}
Define the function $Q=|D u|^{2}$, and assume that $Q$ attains its maximum at $x_{0}\in\mathbb{S}^n$. Choose a local coordinate frame field $\{e_1,\dots,e_n\}$ around $x_{0}$ such that $D_{e_i}e_j(x_0)=0$, $u_{1}(x_0)=|D u|$, and $(u_{rs}(x_0))_{2\leq r,s\leq n}$ is diagonal.
In the following, all computations are performed at $  x_0  $.
By an argument similar to that used in the proof of Theorem \ref{c122},
\begin{equation}\label{equiiur-0117675}
\begin{aligned}[t]
0 \geq&\sum_{i} F^{ii} a_{ii}^2 -2kF+ (1+|D u|^{2}) \sum_{i>1} F^{ii}+F^{11}
- F|D u| \frac{|D f|}{f}+\varepsilon F|D u|^{2}.\\
\end{aligned}
\end{equation}
By \eqref{equiiur-011-12}, \eqref{equiiur-011-13333}, and \eqref{equiiur-0117675},
\begin{equation}\label{c12222215}
 0\geq c_0 (C_{n}^k)^{-\frac{1}{k}} F^{\frac{1}{k}}- |D u| \frac{|D f|}{f}
 +\frac{k(n-k)}{n-k+1}(C_n^k)^{\frac{1}{k}} F^{-\frac{1}{k}}(1+|D u|^{2})-(3k+1).
\end{equation}
It follows from $q \neq 0$ and Theorem \ref{thm:5.1} that
\begin{eqnarray*}
\begin{aligned}
C_n^k\cdot \frac{ \min_{\mathbb{S}^{n}} f}{\max_{\mathbb{S}^{n}} f} \leq e^{\varepsilon u} f \leq  C_n^k\cdot \frac{\max_{\mathbb{S}^{n}} f}{\min_{\mathbb{S}^{n}} f}.
\end{aligned}
\end{eqnarray*}
Therefore,
\begin{eqnarray*}
\begin{aligned}
 0\geq~C(n, k)  \left(\frac{ \min_{\mathbb{S}^{n}} f}{\max_{\mathbb{S}^{n}} f}\right)^{\frac{1}{k}}\left((1+|D u|^{2})^{\frac{1}{2}-\frac{q}{2k}}+(1+|D u|^{2})^{\frac{1}{2}+\frac{q}{2k}}\right)-(3k+1)- |D u| \frac{|D f|}{f},
\end{aligned}
\end{eqnarray*}
which implies $|D u|\leq C$,
  where $C$ is a constant depending on $n$, $k$, $q$, $\min_{\mathbb{S}^{n}}f$, $\max_{\mathbb{S}^{n}}f$, and $\max_{\mathbb{S}^{n}} \frac{|D f|}{f}$.
As in the proof of Theorem \ref{c122},  \eqref{9i898u9uer155} holds.
\end{proof}

\begin{theorem}\label{thm:5.3c21}
Let $1\leq k<n$, $0<\varepsilon <1$, $q\leq1$, $q\neq0$, and $-b-q-k=0$. Suppose that $M$ is a $k$-convex star-shaped hypersurface satisfying equation \eqref{eq:5.1} and that $f\in C^{2}(\mathbb{S}^{n})$ is a positive function. Let $\overline{\rho}=(\min_{\mathbb{S}^n}\rho)^{-1}\rho$.
Then there exists a positive constant $C$ that depends on $n$, $k$, $b$, $q$, $|f|_{C^2}$, $\max_{\mathbb{S}^n}f$, and $\min_{\mathbb{S}^n}f$, and is independent of $\varepsilon$, such that
\begin{equation*}
  |D^2 \overline{\rho}| \leq C.
\end{equation*}
\end{theorem}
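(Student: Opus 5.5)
The idea is to rescale and reduce to the curvature estimate of Theorem \ref{c22}, keeping track of how every constant depends on $\varepsilon$.

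Set $m=\min_{\mathbb{S}^n}\rho$ and let $\overline{M}=m^{-1}M$, whose radial function is $\overline{\rho}$. Principal curvatures scale as $\overline{\kappa}=m\kappa$, so $\sigma_k(\overline{\kappa})=m^k\sigma_k(\kappa)$. The right-hand side of \eqref{eq:5.1} is homogeneous in $\rho$ of degree $b+2q-\varepsilon-q=b+q-\varepsilon=-k-\varepsilon$. Substituting $\rho=m\overline{\rho}$ therefore gives
\[
\sigma_k(\overline{\kappa})=m^{-\varepsilon}f\,\overline{\rho}^{\,b+2q-\varepsilon}(\overline{\rho}^2+|D\overline{\rho}|^2)^{-\frac q2}.
\]
We write this as $\sigma_k(\overline\kappa)=\overline f\,|\overline X|^{b}\langle \overline X,\overline\nu\rangle^q$, where $\overline f(x)=m^{-\varepsilon}f(x)\overline{\rho}(x)^{-\varepsilon}$. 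Thus $\overline M$ solves an equation of the form \eqref{G-eqk} with the same $b,q$ and with the new right-hand function $\overline f$.

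Next we check that $\overline{M}$ meets the hypotheses of Theorem \ref{c22} with bounds independent of $\varepsilon$.

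\textbf{$C^0$ and $C^1$ bounds.} By construction $\min\overline{\rho}=1$. Theorem \ref{c1225} gives $\max\overline\rho\le e^C$ and $|D\log\overline\rho|\le C$, so $|\overline\rho|_{C^1}\le C$. The constant in Theorem \ref{c1225} depends only on $n,k,q$ and $f$, not on $\varepsilon$.

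\textbf{Bounds on $\overline f$.} Theorem \ref{thm:5.1} gives $m^{\varepsilon}\in[\min f/C_n^k,\max f/C_n^k]$, hence $m^{-\varepsilon}$ is bounded above and below independently of $\varepsilon$. Since $0<\varepsilon<1$ and $1\le\overline\rho\le e^C$, the factor $\overline\rho^{-\varepsilon}$ lies in $[e^{-C},1]$. So $\min\overline f$ and $\max\overline f$ are controlled uniformly.

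\textbf{The $C^2$ norm of $\overline f$.} This is the delicate point, because $\overline f$ depends on $\overline{\rho}$ itself. We have $D(\overline\rho^{-\varepsilon})=-\varepsilon\overline\rho^{-\varepsilon}D\log\overline\rho$, which is bounded by $C\varepsilon\le C$. The second derivatives, however, contain the term $\varepsilon\,\overline\rho^{-\varepsilon-1}D^2\overline\rho$, which is exactly the quantity we are trying to bound.

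To handle this, I would avoid treating $\overline f$ as a prescribed datum. The right-hand side $\overline f|X|^b\langle X,\nu\rangle^q$ equals $m^{-\varepsilon}f(X/|X|)|X|^{b-\varepsilon}\langle X,\nu\rangle^q$. This is of the form $\tilde f(X/|X|)\,|X|^{b'}\langle X,\nu\rangle^q$ with $\tilde f=m^{-\varepsilon}f$ and $b'=b-\varepsilon$, where now $|\tilde f|_{C^2}\le C|f|_{C^2}$ uniformly in $\varepsilon$. The exponent $b'$ ranges over the compact interval $[b-1,b]$. So I would invoke Theorem \ref{c22} (Theorem 3.5 of \cite{GongTu20262}) with exponents $(b-\varepsilon,q)$, $q\le1$. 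Its constant depends continuously on $b$, so it stays bounded for $\varepsilon\in(0,1)$; this requires checking that the cited estimate does not degenerate as the exponent varies, which I expect to hold since its proof uses only the structure $q\le 1$. This yields $\sigma_1(\overline\kappa)\le C$ uniformly in $\varepsilon$.

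\textbf{From curvature to $D^2\overline\rho$.} By $k$-convexity, $\overline\kappa\in\Gamma_k\subset\Gamma_1$, so $\sigma_1(\overline\kappa)>0$. The upper bound on $\sigma_1$ then gives $|\overline\kappa_i|\le C$: each $\overline\kappa_i<\sigma_1\le C$, and $\overline\kappa_i>-(n-1)C$. Finally, by \eqref{h_ij} we have $(\log\overline\rho)_{ij}$ expressed linearly through $h^i_j$ and the $C^1$ quantities, and the matrix $-\delta_{ik}+\overline\rho_i\overline\rho_k/(\overline\rho^2v^2)$ is invertible with uniformly bounded inverse because $|D\log\overline\rho|\le C$. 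Hence $|D^2\overline\rho|\le C$.

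The main obstacle is the uniformity of the curvature estimate, that is, making sure the constant in Theorem \ref{c22} stays bounded as $\varepsilon$ varies through $(0,1)$.
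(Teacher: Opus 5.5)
Your approach is the paper's. You rescale to $\overline\rho$ and obtain \eqref{equation-bar-kappa}. You use Theorems \ref{thm:5.1} and \ref{c1225} to control $(\min_{\mathbb{S}^n}\rho)^{-\varepsilon}$, $\overline\rho$ and $D\overline\rho$ independently of $\varepsilon$. You then rerun the curvature estimate of Theorem 3.5 in \cite{GongTu20262}. Writing the right-hand side as $m^{-\varepsilon}f(X/|X|)\,|X|^{b-\varepsilon}\langle X,\nu\rangle^q$ is a clean way to say what the paper's ``replace (3.7) by \eqref{equation-bar-kappa}'' means: the same structure with exponent $b-\varepsilon\in[b-1,b]$ and a datum with uniformly bounded $C^2$ norm. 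In both your argument and the paper's, the $\varepsilon$-uniformity rests on inspecting that cited computation.

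One step fails for $k=1$, the passage from the curvature bound to $|D^2\overline\rho|\le C$. From $\overline\kappa\in\Gamma_1$ you cannot conclude $\overline\kappa_i<\sigma_1(\overline\kappa)$; for example $\overline\kappa=(10,-9.5,0,\dots,0)\in\Gamma_1$. That inequality is $\sigma_1(\overline\kappa|i)>0$, which holds for $\overline\kappa\in\Gamma_2$, i.e.\ for $k\ge 2$. For $k\ge2$ your chain is fine: $\sigma_1(\overline\kappa)\le C$ gives $|\overline\kappa_i|\le C$, and then \eqref{h_ij} with the uniformly invertible matrix $-\delta_{ik}+\overline\rho_i\overline\rho_k/(\overline\rho^2v^2)$ gives $|D^2\overline\rho|\le C$.

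For $k=1$, however, the bound $\sigma_1(\overline\kappa)\le C$ from Theorem \ref{c22} is just the equation itself and says nothing about $D^2\overline\rho$. You need a separate argument there. Once $|D\log\overline\rho|\le C$, the equation is a uniformly elliptic quasilinear equation $a^{ij}(\overline\rho,D\overline\rho)\,\overline\rho_{ij}=B_\varepsilon(x,\overline\rho,D\overline\rho)$. Its coefficients and right-hand side are controlled uniformly for $\varepsilon\in(0,1)$. Standard $C^{1,\alpha}$ estimates for quasilinear equations followed by Schauder theory then give $|D^2\overline\rho|\le C$ independent of $\varepsilon$. Alternatively, use a curvature estimate that bounds the largest principal curvature rather than $\sigma_1$.
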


\begin{proof}
Denote $M_{\overline{\rho}}=\{X\in\mathbb{R}^{n+1}:X=\overline{\rho}(x)x, ~ x \in \mathbb{S}^n\}$. Let $\bar{\kappa}=(\bar{\kappa}_1,\dots,\bar{\kappa}_n)$ be the principal curvatures of $M_{\overline{\rho}}$. By equation \eqref{eq:5.1},
\begin{equation}\label{equation-bar-kappa}
\sigma_k(\bar{\kappa})
= f(\min_{\mathbb{S}^n}\rho)^{-\varepsilon} (\overline{\rho})^{b+2q-\varepsilon}
\bigl(\overline{\rho}^{2}+|D \overline{\rho}|^2\bigr)^{-\frac{q}{2}},
\quad\text{on}\quad\mathbb{S}^n.
\end{equation}
By Theorem \ref{c1225}, there exist positive a constant $C$ depending on $n$, $k$, $q$, $\min_{\mathbb{S}^{n}}f$, $\max_{\mathbb{S}^{n}} f$, and $\max_{\mathbb{S}^{n}}\frac{|D f|}{f}$, but independent of $\varepsilon$, such that
\begin{equation}\label{eq:5.9c1}
1 \leq \overline{\rho} \leq \frac{\max_{\mathbb{S}^n} \rho}{\min_{\mathbb{S}^n} \rho} \leq C,
\end{equation}
and
\begin{equation}\label{eq:5.10c1}
|D \overline{\rho}|
= \frac{\rho}{\min_{\mathbb{S}^n} \rho}\cdot \frac{|D \rho|}{\rho}
\leq \frac{\max_{\mathbb{S}^n} \rho}{\min_{\mathbb{S}^n} \rho}\cdot \frac{|D \rho|}{\rho}
\leq C.
\end{equation}

According to \eqref{eq:5.9c1} and \eqref{eq:5.10c1}, replacing equation (3.7) in \cite{GongTu20262} by equation \eqref{equation-bar-kappa} and then carrying out the same computation as in Theorem 3.5 of \cite{GongTu20262}, it can be proved that for any $\varepsilon\in(0,1)$, there exists a constant $C$ independent of $ \varepsilon$ such that $ |D^2 \overline{\rho}| \leq C $.
\end{proof}

\subsection{Proof of Theorem~\ref{Th-Two} }
We divide the proof  into three steps.\\
\textbf{Step 1:  Existence and strict convexity:} For any small constant $\varepsilon \in (0,1)$, equation \eqref{eq:5.1} has a unique $k$-convex solution $\rho_\varepsilon$ by applying the method used in the proof of Theorem \ref{Th-One}.  Denote $\bar{\rho}_\varepsilon = \dfrac{\rho_\varepsilon}{\min_{\mathbb{S}^n} \rho_\varepsilon}$ and $M_{\bar{\rho}_\varepsilon}=\{X\in\mathbb{R}^{n+1}:X
=\bar{\rho}_\varepsilon(x)x, ~ x \in \mathbb{S}^n\}$.
Let $\bar{\kappa}_\varepsilon$ be the principal curvatures of $M_{\bar{\rho}_\varepsilon}$.
Then
\begin{equation}\label{equation-bar-kappa-epsilon}
  \sigma_k(\bar{\kappa}_\varepsilon)= f(\min_{\mathbb{S}^n}\rho_\varepsilon)^{-\varepsilon} {\bar{\rho}_\varepsilon}^{b+2q-\varepsilon} (\bar{\rho}_\varepsilon^2 + |D \bar{\rho}_\varepsilon|^2)^{-\frac{q}{2}}, \quad\text{on}\quad\mathbb{S}^n.
\end{equation}
It follows from Theorem \ref{thm:5.1} that
\((\min_{\mathbb{S}^n}\rho_\varepsilon)^{-\varepsilon}\) is bounded from
above and below by positive constants independent of \(\varepsilon\).
Using \eqref{eq:5.9c1}, \eqref{eq:5.10c1}, Theorem \ref{thm:5.3c21},
and the Evans--Krylov theorem, one obtains
$|\bar{\rho}_\varepsilon|_{C^{3,\alpha}}\leq C$, where $C$ is independent of $ \varepsilon $. Therefore, there exists a sequence $\{\varepsilon_i\}_{i=1}^{\infty}\subset(0,1)$ such that $\bar{\rho}_{\varepsilon_i}\to \rho$ in $C^{3,\alpha}(\mathbb{S}^n)$ for some $\rho\in C^{3,\alpha}(\mathbb{S}^n)$, and $(\min_{\mathbb{S}^n}\rho_{\varepsilon_i})^{-\varepsilon_i}\to \gamma$ for some positive constant $\gamma$. By equation \eqref{equation-bar-kappa-epsilon}, we obtain that $M=\{X\in\mathbb{R}^{n+1}:X=\rho(x)x, ~ x \in \mathbb{S}^n\}$ satisfies equation \eqref{eq:1.4}.

If $b \leq -k$ and $f$ satisfies \eqref{908998}, then $M_{\bar{\rho}_{\varepsilon_i}}$ is strictly convex for each $i$. Hence $M$ is convex, and by Theorem \ref{crt}, $M$ is strictly convex.
\\
\textbf{Step 2: Uniqueness of the constant $\gamma$:} Assume that there exist two positive constants $\gamma, \tilde{\gamma}$ and two $k$-convex hypersurfaces
$M_\rho=\{X\in\mathbb{R}^{n+1}:X=\rho(x)x, ~ x \in \mathbb{S}^n\}$, $M_{\tilde{\rho}}=\{X\in\mathbb{R}^{n+1}:X=\tilde{\rho}(x)x, ~ x \in \mathbb{S}^n\}$
satisfying
$$\sigma_k(W[\rho])=\gamma f \rho^{b+2q} (\rho^2 + |D \rho|^2)^{-\frac{q}{2}},\quad
\sigma_k(W[\tilde{\rho}])=\tilde{\gamma} f \tilde{\rho}^{b+2q} (\tilde{\rho}^2 + |D \tilde{\rho}|^2)^{-\frac{q}{2}}
, \quad\text{on}\quad\mathbb{S}^n,$$
where $W[\rho]$ and $W[\tilde{\rho}]$ are the Weingarten matrices of $M_\rho$ and $M_{\tilde{\rho}}$, respectively.
Suppose $G = \rho (\tilde{\rho})^{-1}$ attains its maximum at $x_0 \in \mathbb{S}^n$. Then at $x_0$, we have
\begin{equation*}
  0=D \log G=\rho^{-1}D\rho-(\tilde{\rho})^{-1}D\tilde{\rho}
\end{equation*}
and
\begin{equation*}
  0\geq D^2 \log G=D^2 (\log\rho)-D^2(\log\tilde{\rho}).
\end{equation*}
By \eqref{h_ij}, $\rho W[\rho] \geq \tilde{\rho}W[\tilde{\rho}]$.
Therefore, at $x_0$,
\begin{eqnarray*}
\begin{aligned}
\frac{\gamma}{{\tilde{\gamma}}} = \frac{\gamma f(x_0)}{\tilde{\gamma} f(x_0)}
= \frac{\rho^{-b-2q} (\rho^2 + |D \rho|^2)^{\frac{q}{2}} \sigma_k(W[\rho])}{\tilde{\rho}^{-b-2q} (\tilde{\rho}^2 + |D \tilde{\rho}|^2)^{\frac{q}{2}} \sigma_k(W[\tilde{\rho}])}
= \frac{\sigma_k(\rho W[\rho])}{\sigma_k(\tilde{\rho}W[\tilde{\rho}])} \geq 1.
\end{aligned}
\end{eqnarray*}
Similarly, we obtain $\gamma \leq \tilde{\gamma}$. Thus, $\gamma = \tilde{\gamma}$.\\
\textbf{Step 3: Uniqueness of the  solution:} For $i=1,2$, assume that $M_{\rho_i}=\{X\in\mathbb{R}^{n+1}:X=\rho_i(x)x, ~ x \in \mathbb{S}^n\}$
are both $k$-convex and satisfy
\begin{equation*}
  \sigma_k(W[\rho])=\sigma_k(\kappa)=\gamma f \rho^{b+2q} (\rho^2 + |D \rho|^2)^{-\frac{q}{2}}
  , \quad\text{on}\quad\mathbb{S}^n.
\end{equation*}
Here $W[\rho]$ is the Weingarten matrix of $M_{\rho}=\{X\in\mathbb{R}^{n+1}:X=\rho(x)x, ~ x \in \mathbb{S}^n\}$
and its components are denoted by $W_{ij}$, so that $W_{ij}=h^i_j$.
Let
$$\mathcal{G}(\rho)=\sigma_k(W[\rho])\rho^{-b-2q}(\rho^2+|D\rho|^2)^{\frac{q}{2}}.$$
Since $\mathcal{G}$ is invariant under scaling, we may assume $\rho_1 \leq \rho_2$ and $\rho_1(x_0) = \rho_2(x_0)$ for some point $x_0 \in \mathbb{S}^n$. Denote $\rho_t = t\rho_1 + (1-t)\rho_2$ for $0 \leq t \leq 1$. A direct calculation shows that
\begin{equation*}
\begin{aligned}
0 = \mathcal{G}(\rho_2) - \mathcal{G}(\rho_1) &= \int_0^1 -\frac{d}{dt} \mathcal{G}(\rho_t) dt \\
&= \sum_{i,j} \widetilde{a}_{ij}(x)(\rho_1-\rho_2)_{ij} + \sum_i b_i(x)(\rho_1-\rho_2)_i + c(x)(\rho_1-\rho_2),
\end{aligned}
\end{equation*}
where
\begin{equation*}
\begin{aligned}
\widetilde{a}_{ij} =-\int_{0}^{1}\frac{\partial\mathcal{G}(\rho_t)}{\partial(\rho_t)_{ij}}dt= \int_0^1 \rho_t^{-b-2q-1}(\rho_t^2 + |D \rho_t|^2)^{\frac{q-1}{2}}\frac{\partial \sigma_k(W[\rho_t])}{\partial W_{si}}(\delta_{sj}-\frac{(\rho_t)_s
(\rho_t)_j}{\rho_t^2v^2}) \, dt
,
\end{aligned}
\end{equation*}
\begin{equation*}
\begin{aligned}
b_i = -\int_{0}^{1}\frac{\partial\mathcal{G}(\rho_t)}{\partial(\rho_t)_i}dt,\quad c=-\int_{0}^{1}\frac{\partial\mathcal{G}(\rho_t)}{\partial \rho_t}.
\end{aligned}
\end{equation*}
It follows from $(\widetilde{a}_{ij})>0$, $\rho_1-\rho_2\leq 0$, $\rho_1(x_0)-\rho_2(x_0)=0$, and the maximum principle that $\rho_1-\rho_2 \equiv 0$ on $\mathbb{S}^n$. This completes the proof.

\textbf{Conflict of interest statement}
On behalf of all authors, the corresponding author states that there is no conflict of interest.

\textbf{Data availability statement}
No datasets were generated or analysed during the current study.

\textbf{Acknowledgements}
We are grateful to Professor Qiang Tu for his consistent encouragement, support, and many useful suggestions throughout this research.


\begin{thebibliography}{99}

\bibitem{Aleksandrov-1942}
Aleksandrov, A.:
\emph{Existence and uniqueness of a convex surface with a given integral curvature},
In CR (Doklady) Acad. Sci. URSS (NS), 131-134. (1942)

\bibitem{Al1}
Aleksandrov, A.:
\emph{Uniqueness theorems for surfaces in the large},
Vestnik Leningrad. Univ. \textbf{11}, 5-17 (1956); \textbf{12}, 15-44 (1957);
\textbf{13}, 14-26 (1958); \textbf{13}, 27-34 (1958); \textbf{13}, 5-8 (1958);
\textbf{14}, 5-13 (1959); \textbf{15}, 5-13 (1960)







\bibitem{An}
Andrade, F., Barbosa, J., de Lira, J.:
\emph{Closed Weingarten hypersurfaces in warped product manifolds},
Indiana Univ. Math. J. \textbf{58}, 1691-1718 (2009)





\bibitem{Ba1}
Bakelman, I., Kantor, B.:
\emph{Estimates of the solutions of quasilinear elliptic equations that are connected with problems of geometry in the large}(Russian),
 Mat. Sb. (N. S.), \textbf{91}, 336-349, 471 (1973)

\bibitem{Ba2}
Bakelman, I., Kantor, B.:
 \emph{Existence of a hypersurface homeomorphic to the sphere in Euclidean space with a given mean curvature},
 Geometry Topology, \textbf{1}, 3-10 (1974)



 \bibitem{Ba-Li}
Barbosa, J., de Lira, J., Oliker, V.:
\emph{A priori estimates for starshaped compact hypersurfaces with prescribed mth curvature function in space forms},
 In: Nonlinear Problems in Mathematical Physics and Related Topics, vol. I, 35-52. Int. Math. Ser. (N. Y.), 1, Kluwer/Plenum, New York (2002)

\bibitem{BF-2019}
B\"or\"oczky, K., Fodor, F.:
\emph{The $L_p$ dual Minkowski problem for $p>1$ and $q>0$},
J. Differ. Equations \textbf{266}(12), 7980-8033, (2019)


\bibitem{CaffGpMa}
Caffarelli, L., Guan, P., Ma, X.:
\emph{A constant rank theorem for solutions of fully nonlinear elliptic equations},
Commun. Pure Appl. Math. \textbf{60}, 1769-1791 (2007)

\bibitem{Caffarelli-Nirenberg-Spruck-1984}
Caffarelli, L., Nirenberg, L., Spruck, J.:
\emph{The dirichlet problem for nonlinear second‐order elliptic equations I. Monge--Amp\`ere equation},
Commun. Pure Appl. Math. \textbf{37}, 369-402 (1984)

\bibitem{Ca}
Caffarelli, L., Nirenberg, L., Spruck, J.:
\emph{Nonlinear second order elliptic equations, IV. Starshaped compact Weingartenhypersurfaces},
 Current Topics in PDEs. 1-26 (1986)

 \bibitem{CHZ-2019}
Chen, C., Huang, Y., Zhao, Y.:
\emph{Smooth solutions to the $L_p$ dual Minkowski problem},
Math. Ann. \textbf{373}(3), 953-976, (2019)

\bibitem{Chen-Li-Wang-2018}
Chen, D., Li, H., Wang, Z.:
 \emph{Starshaped compact hypersurfaces with prescirbed Weingarten curvature in warped product manifolds},
 Calc. Var. Partial Differ. Equ.
 \textbf{57}, no. 2, Paper No. 42, 26 pp (2018)

 \bibitem{Chen-Li-2021}
Chen, H., Li, Q.:
\emph{The $L_p$ dual Minkowski problem and related parabolic flows},
J. Funct. Anal. \textbf{281}(8), 109139, (2021)



\bibitem{Chou-Wang-2001}
Chou, K., Wang, X.:
\emph{A variational theory of the Hessian equation},
Commun. Pur. Appl. Math. \textbf{54}(9), 1029-1064 (2001)




\bibitem{Fi}
Firey, W.:
\emph{Christoffel problem for general convex bodies},
 Mathematik \textbf{15}, 7-21 (1968)

\bibitem{GongTu20262}
Gong, J., Tu, Q.:
\emph{The existence of  $(\mathbf{p}, k)$-convex hypersurfaces for a class of Hessian quotient type curvature equations}, arXiv:2604.13578v1




 \bibitem{Guan12}
Guan, P., Li, J., Li, Y.:
\emph{Hypersurfaces of Prescribed Curvature Measure},
Duke Math. J. \textbf{161}, 1927-1942 (2012)

\bibitem{Guan-Li-1997}
Guan, P., Li, Y.:
\emph{$C^{1,1}$ estimates for solutions of a problem of Alexandrov},
Commun. Pur. Appl. Math. \textbf{50}(8), 789-811 (1997)

\bibitem{Guan-Lin-Ma-2006}
Guan, P., Lin, C., Ma, X.:
\emph{The Christoffel-Minkowski problem II: Weingarten curvature equations},
Chinese Ann. Math. B \textbf{27}, 595 (2006)

\bibitem{Guan09}
Guan, P., Lin, C., Ma, X.:
\emph{The Existence of Convex Body with Prescribed Curvature Measures},
Int. Math. Res. Not. \textbf{11}, 1947-1975 (2009)

\bibitem{Guan-Ren15}
Guan, P., Ren, C., Wang, Z.:
\emph{Global $C^2$ estimates for convex solutions of curvature equations},
 Commun. Pure Appl. Math. \textbf{68}, 1287-1325 (2015)


\bibitem{Hou-Ma-Wu-2010}
Hou, Z., Ma, X., Wu, D.:
\emph{A second order estimate for complex Hessian equations on a compact Kähler manifold},
Math. Res. Lett. \textbf{17}(3), 547-561 (2010)




\bibitem{HX-2013}
Huang, Y., Xu, L.:
\emph{Two problems related to prescribed curvature measures},
Discrete Contin. Dyn. Syst. \textbf{33}(5), 1975-1986, (2013)

\bibitem{HZ-2018}
Huang, Y., Zhao, Y.:
\emph{On the $L_p$ dual Minkowski problem},
Adv. Math. \textbf{332}, 57-84, (2018)

\bibitem{Hui99}
Huisken, G., Sinestrari, C.:
\emph{Convexity estimates for mean curvature flow and singularities of mean convex surfaces},
Acta Math. \textbf{183}, 45-70, (1999)

\bibitem{Iv1}
Ivochkina, N.:
\emph{Solution of the Dirichlet problem for curvature equations of order $m$},
 Math. USSR-Sbornik. \textbf{67}, 317-339 (1990)

\bibitem{Iv2}
Ivochkina, N.:
\emph{The Dirichlet problem for the equations of curvature of order $m$},
Leningrad Math. J. \textbf{2}, 631-654 (1991)


\bibitem{Jin}
Jin, Q., Li, Y.:
\emph{Starshaped compact hypersurfaces with prescribed $k$-th mean curvature in hyperbolic space},
Discrete Contin. Dyn. Syst. \textbf{15}, 367-377 (2006)



\bibitem{Li-Ol}
Li, Y., Oliker, V.:
\emph{Starshaped compact hypersurfaces with prescribed $m$-th mean curvature in elliptic space},
J. Partial Differ. Equ. \textbf{15}, 68-80 (2002)

\bibitem{Li-Sh}
Li, Q., Sheng, W.:
\emph{Closed hypersurfaces with prescribed Weingarten curvature in Riemannian manifolds},
 Calc. Var. Partial Differ. Equ. \textbf{48}, 41-66 (2013)




\bibitem{LYZ-2018-1}
Lutwak, E., Yang, D., Zhang, G.:
\emph{$L_p$ dual curvature measures},
Adv. Math. \textbf{329}, 85-132, (2018)

\bibitem{Oliker-1983}
Oliker, V.:
\emph{Existence and uniqueness of convex hypersurfaces with prescribed Gaussian curvature in spaces of constant curvature},
Sem. Inst. Mate. Appl. “Giovanni Sansone”, Univ. Studi Firenze (1983)

\bibitem{Ol}
Oliker, V.:
\emph{Hypersurfaces in $\mathbb{R}^{n+1}$ with prescribed Gaussian curvature and related equations of Monge--Amp\`ere type},
Commun. Partial Differ. Equ. \textbf{9}, 807-838 (1984)

\bibitem{Pogorelov-1973}
Pogorelov, A.:
\emph{Extrinsic geometry of convex surfaces},
American Mathematical Soc. (1973)

\bibitem{Ren}
Ren, C., Wang, Z.:
\emph{On the curvature estimates for Hessian equations},
 Am. J. Math. \textbf{141}, 1281-1315 (2019)

 \bibitem{Ren1}
Ren, C., Wang, Z.:
\emph{The global curvature estimates for the $n-2$ Hessian equation}, Calc. Var. Partial Differ. Equ. \textbf{62}, 239 (2023)


\bibitem{Spruck-2005}
Spruck, J.:
\emph{Geometric aspects of the theory of fully nonlinear elliptic equations},
Global theory of minimal surfaces, 283-309 (2005)

\bibitem{SX}
Spruck, J., Xiao, L.: \emph{A note on star-shaped compact hypersurfaces with prescribed scalar curvature in space
forms}, Rev. Mat. Iberoam. \textbf{33}, 547-554 (2017)

\bibitem{Tr}
Treibergs, A., Wei, W.:
\emph{Embedded hypersurfaces with prescribed mean curvature},
J. Differ. Geom. \textbf{18}, 513-521 (1983)





\end{thebibliography}
\end{document}